\numberwithin{equation}{section}
\theoremstyle{plain}
\newtheorem{theorem}{Theorem}
\newtheorem{lemma}{Lemma}[section]
\newtheorem{proposition}{Proposition}[section]
\newtheorem{corollary}{Corollary}[section]
\theoremstyle{remark}
\newtheorem*{remark}{Remark}
\newtheorem{example}{Example}
\DeclareMathOperator{\tr}{tr}
\DeclareMathOperator{\id}{Id}
\newcommand{\R}{\mathbb{R}}
\newcommand{\dd}{\mathrm{d}}
\begin{document}

\title[On an elliptic system with symmetric potential]{On an elliptic system with symmetric potential possessing two global minima}
\author{Nicholas D.\ Alikakos}
\author{Giorgio Fusco}
\address{Department of Mathematics\\ University of Athens\\ Panepistemiopolis\\ 15784 Athens\\ Greece \and Institute for Applied and Computational Mathematics\\ Foundation of Research and Technology -- Hellas\\ 71110 Heraklion\\ Crete\\ Greece}
\email{\href{mailto:nalikako@math.uoa.gr}{\texttt{nalikako@math.uoa.gr}}} 
\address{Dipartimento di Matematica Pura ed Applicata\\ Universit\`a degli Studi dell'Aquila\\ Via Vetoio\\ 67010 Coppito\\ L'Aquila\\ Italy} \email{\href{mailto:fusco@univaq.it}{\texttt{fusco@univaq.it}}}
%\date{\today}
%\thanks{}

\begin{abstract}
We consider the system
\[
\Delta u-W_u(u)=0, \text{ for } u: \R^2 \to \R^2,\ W: \R^2 \to \R,
\]
where $W_u (u) := ( \partial W/ \partial u_1, \ldots, \partial W / \partial u_n )^{\top}$ is a smooth potential, symmetric with respect to the $u_1$, $u_2$ axes, possessing two global minima at $a^\pm=(\pm a,0)$ and two connections $e^\pm(x_1)$ connecting the minima. We prove that there exists an equivariant solution $u(x_1,x_2)$ satisfying
\begin{align*}
u(x_1,x_2) &\to a^\pm \text{ as } x_1\to \pm \infty, \\
u(x_1,x_2) &\to e^\pm(x_1) \text{ as } x_2 \to \pm \infty.
\end{align*}
The problem above was first studied by Alama, Bronsard, and Gui \cite{1}, under related hypotheses to the ones introduced in the present paper. At the expense of one extra symmetry assumption, we avoid their considerations with the normalized energy and strengthen their result. We also provide examples for $W$.
\end{abstract}

\maketitle

\section{Introduction}

The problem 
\begin{equation}
\label{problem}
\Delta u-W_u(u)=0, \text{ for } u: \R^2 \to \R^2,\ W: \R^2 \to \R,
\end{equation}
has variational structure and it is the Euler--Lagrange equation corresponding to the functional
\begin{equation}
J(u) = \int_{\R^2} \left\{ \frac{1}{2} |\nabla u|^2 + W(u) \right\} \dd x.
\label{action}
\end{equation}
An important feature of the problem, and also source of difficulty, is that the action $J$ is infinite for nonconstant solutions (\cite{22}).

Problem \eqref{problem} originates from geometric evolution and phase transitions. The relevant dynamical problem is the parabolic system
\[
\hat{u}_t = \varepsilon^2 \Delta \hat{u} - W_u(\hat{u}), \text{ for } \hat{u} : \R^2 \to \R^2.
\]
This is a gradient flow for the functional 
\[ 
\int_{\R^2} \left\{ \frac{1}{2} |\varepsilon\nabla u|^2 + W(u) \right\} \dd x,
\]
possessing diffused interfaces separating the minima of $W$.

\begin{figure}
\includegraphics[scale=1.05]{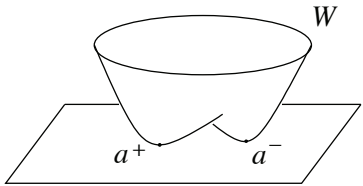}
\caption{The double-well potential $W$.}
\label{figure1}
\end{figure}

As was established in \cite{3}, there are multiple-well potentials $W$ for which the (ODE) connection problem between two phases admits more than one solution,
\begin{equation}
\ddot{U} - W_u(U) = 0, \text{ with } U(\pm\infty) = a^\pm. \label{connection-problem}
\end{equation}
To be specific, assume that there exist precisely two connections $e^\pm$, solutions to \eqref{connection-problem}. As a result, the diffused interfaces separating the phases $a^+$ and $a^-$ are generally made up of two types of `material', $e^+$ and $e^-$, one for each connection (see Figure 1 in \cite{3}). Simulations show that a wave is generated on the interface, which propagates and converts it into the type with lesser action 
\[
E(U) = \int_\R \left( \frac{1}{2} |\dot{U}|^2 + W(U) \right).
\]
The structure of the solution of $\hat{u}_t = \varepsilon^2 \Delta \hat{u} - W_u (\hat{u})$ close to the interface and near the junction is genuinely two dimensional (in $x_1$, $x_2$) and well-approximated by a suitably rescaled solution to the following traveling-wave problem
\begin{eqnarray} \left\{\begin{array}{l}
\label{traveling-wave-problem}
\Delta u - W_u(u) = -c \dfrac{\partial u}{\partial x_2},  \text{ for } u: \R^2 \to \R^2 \medskip\\
u(x_1, x_2) \to a^\pm, \text{ as } x_1\to \pm \infty, \medskip\\
u(x_1, x_2) \to e^\pm(x_1), \text{ as } x_2\to \pm \infty,
\end{array}\right.
\end{eqnarray}
with $\hat{u} (x_1,x_2,t)=u(x_1,x_2-ct)$, where $c$ is the speed of the wave, which can be shown to be proportional to $E(e^+) - E(e^-)$.

Problem \eqref{traveling-wave-problem} is rather difficult for $c \neq 0$ and is still open. The analogous ODE problem
\begin{eqnarray}
 \label{ode-problem}
\left\{\begin{array}{l}
\ddot{U} - W_u(U) = -c \dot{U} \medskip\\
U(\pm \infty) = a^\pm
\end{array}\right. 
\end{eqnarray}
was recently settled in \cite{4}.

In the case $E(e^+)=E(e^-)$, the wave becomes a standing wave, $c=0$, and \eqref{traveling-wave-problem} reduces to the problem studied in the present paper.

\medskip

Before stating our main result, we introduce the following hypotheses. \medskip

\noindent {\bf (H1)} ({\em Nondegeneracy}) The potential $W$ is $C^2$, $W:\R^2 \ to \R_+ \cup \{0\}$, and has exactly two nondegenerate global minima $a^\pm$, $a^\pm = (\pm a,0)$ with the properties $\partial^2 W(u) \geq c^2 \id$ and $|u - a^\pm| \leq r_0$, for $r_0>0$. \medskip

\noindent {\bf (H2)} ({\em Symmetry}) $W$ has dihedral symmetry, i.e., $W(gu) = W(u)$, for $g\in \mathcal{H}^2_2$, and the solution is equivariant, i.e., $u(gx)=gu(x)$, for all $g \in \mathcal{H}^2_2$. We assume that $W(u)\geq \max_{\partial C_0} W(u)$, for $u$ outside a certain bounded, $\mathcal{H}^2_2$-symmetric, convex set $C_0$. \medskip

\noindent {\bf (H3)} ({\em $Q$-monotonicity}) Let $\mathcal{D} := \{ (u_1, u_2) \mid u_1 > 0 \}$. We assume that there exists $Q: \bar{\mathcal{D}} \to \R_+ \cup\{0\}$, continuous, with the following properties:
\begin{enumerate}
\item[(i)] $Q$ is convex
\item[(ii)] $Q(u) > 0$ and $Q_u (u) \neq 0$ on $\mathcal{D} - \{a^\pm\}$
\item[(iii)] $Q(u+a^+) = |u|+H(u)$
\end{enumerate}
where $H = \bar{\mathcal{D}} \to \R$ is a smooth function that satisfies $H(0)=H_u(0)=0$, and
\[
W_u(u) \cdot Q_u(u) \geq 0, \text{ on } \mathcal{D} \setminus\{a^+\}.
\]

\noindent {\bf (H4)} The `scalar' trajectory $e_0$ which always exists by symmetry\footnote{The symmetry of $W$ assumed in (H2) implies that $\partial W / \partial u_2 (u_1,0) = 0$. Consequently, the solution of the scalar equation $e_{x_1 x_1} - \partial W / \partial u_1 (e,0) = 0$ with $e(\pm\infty)=\pm a$,
extends trivially to a solution of \eqref{problem} by setting $e_0(x_1) = (e(x_1),0)$. We normalize it by taking $e(0)=0$.} 
and as a curve lies on the $u_1$ axis and connects $a^+$, $a^-$, is assumed not to be a global minimum of the action
\[
E(U)=\int_\R \left\{ \frac{1}{2} |U_x|^2 + W(U) \right\} \dd x
\]
among the trajectories connecting $a^-$ and $a^+$. It follows by \cite{5} that there exists at least one pair of connecting trajectories $e_\pm$, which globally minimize the action in the class of trajectories that connect $a^\pm$ and with action strictly less than that of the scalar trajectory, $E(e_\pm) < E(e_0)$. \medskip

\noindent {\bf (H5)} Let $\mathcal{C}$ denote the set of connections between $a^+$ and $a^-$ and let $\mathcal{M}$ denote the set of globally minimizing connections. We assume that $\mathcal{C}$ is discrete and $\mathcal{C} \setminus \mathcal{M}$ is finite.

\medskip

Then, under the hypotheses above, we have the following

\begin{theorem}\label{theorem}
Under hypotheses {\em (H1)--(H5)} there exists a solution to
\[
\Delta u - W_u(u) = 0, \text{ for } u = \R^2 \to \R^2,
\]
which is $\mathcal{H}^2_2$-equivariant with the following properties:
\begin{enumerate}
\item[(i)] $u$ is a positive map, i.e., $u(\bar{\mathcal{D}}) \subset \bar{\mathcal{D}}$. \medskip
\item[(ii)] $|u(x) - a^+| < M e^{-c |x_1|}$, \text{ for } $x_1\ge0$ and $M$, $c$ constants. \medskip 
\item[(iii)] $\lim_{R \to \infty} \dfrac{1}{R} \int_{|x_2| < R} \left( \dfrac{1}{2} |\nabla u|^2 + W(u) \right) \dd x = E_{\min}$, where $E_{\min}$ is the value of $E$ on $\mathcal{M}$. \medskip 
\item[(iv)] The solution $u$ connects $a^\pm$ in the $x_1$-direction and a pair $e^\pm$ from $\mathcal{M}$ in the $x_2$-direction,
\begin{align*}
&\lim_{x_1 \to \pm\infty} u(x_1, x_2) = a^\pm, \medskip\\
&\lim_{x_2 \to \pm\infty} u(x_1, x_2) = e^\pm(x_1).
\end{align*}

\end{enumerate}
\end{theorem}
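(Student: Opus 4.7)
The plan is to construct the solution by direct minimization on an expanding family of compact boxes and then pass to the limit. Concretely, for each $R>0$ set $\Omega_R=[-R,R]\times[-R,R]$ and minimize $J_R(u)=\int_{\Omega_R}\{\tfrac12|\nabla u|^2+W(u)\}\,\dd x$ over the class of $\mathcal{H}^2_2$-equivariant $H^1$-maps whose boundary trace agrees with $a^\pm$ on $\{x_1=\pm R\}$ and with a suitably truncated profile of $e^\pm(x_1)$ on $\{x_2=\pm R\}$. By equivariance this reduces to a minimization on the first quadrant. Using (H2), if any admissible $u$ leaves the convex set $C_0$ I can replace it by $\proj_{C_0}\circ u$ without increasing $J_R$, yielding an $L^\infty$ bound; the direct method and elliptic regularity then produce a smooth equivariant minimizer $u_R$.

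Hypothesis (H3) is the tool that forces $u_R(\bar\Omega_R)\subset\bar{\mathcal D}$, i.e., property (i). I would argue this by a comparison / first-variation calculation: for a smooth equivariant solution $u$ the scalar $v:=Q(u)$ satisfies, at interior points where $u\in\mathcal{D}$, $\Delta v=Q_u\cdot W_u(u)+D^2Q(u)(\nabla u,\nabla u)\ge 0$ thanks to convexity of $Q$ and the sign condition $W_u\cdot Q_u\ge 0$. Equivariance pins $v$ to zero on the coordinate axes and nonnegativity on the rest of the quadrant boundary, so the maximum principle rules out $v<0$, meaning $u$ cannot cross into $\{u_1<0\}$. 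A localized comparison near $a^+$ using the $Q(u+a^+)=|u|+H(u)$ normalization, combined with the nondegeneracy bound $\partial^2W\ge c^2\id$ from (H1), then gives the exponential decay (ii) by a standard barrier built from $M\mathrm e^{-c|x_1|}$.

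Passing $R\to\infty$ along a subsequence, local $C^{2,\alpha}$ estimates produce a smooth equivariant entire solution $u$. The key quantitative input is an almost-optimal energy bound $\int_{(-R_0,R_0)\times(t,t+1)}\{\tfrac12|\nabla u_R|^2+W(u_R)\}\,\dd x\le E_{\min}+o(1)$ uniformly for $|t|\le R-R_0$, obtained by comparing $u_R$ with a competitor equal to $e^\pm$ in the bulk and interpolating on a thin horizontal collar; (H4) is what guarantees that this competitor strictly beats the scalar profile $e_0$. The matching lower bound comes from the observation that on each horizontal slice the restriction $u(\,\cdot\,,x_2)$ lies in the admissible class of $\R$-connections between $a^-$ and $a^+$, hence has 1D action $\ge E_{\min}$. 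Integrating in $x_2$ yields (iii). A slicing argument in $x_2$ combined with this sharp equality lets me extract, from any sequence $x_2^{(n)}\to+\infty$, a subsequence for which $u(\,\cdot\,,x_2^{(n)})$ converges in $H^1_{\mathrm{loc}}\cap C^0_{\mathrm{loc}}$ to some minimizing connection in $\mathcal{M}$, and similarly as $x_2\to-\infty$.

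The main obstacle, and the place where (H5) is indispensable, is ruling out oscillation between distinct minimizing connections as $x_2\to\pm\infty$, i.e., upgrading subsequential convergence to full convergence to a single $e^\pm\in\mathcal{M}$. Since $\mathcal{C}$ is discrete and $\mathcal{C}\setminus\mathcal{M}$ is finite, any two distinct elements of $\mathcal{M}$ are separated by a positive $H^1_{\mathrm{loc}}$-gap; the map $x_2\mapsto u(\,\cdot\,,x_2)$ is continuous into $H^1_{\mathrm{loc}}$ by elliptic regularity, so the subsequential limit must in fact be unique on each end. Equivariance then couples the choices at $+\infty$ and $-\infty$ into a single pair $e^\pm$, giving the second line of (iv); the first line $u(x_1,x_2)\to a^\pm$ as $x_1\to\pm\infty$ is a direct consequence of (ii). I expect the delicate technical point to be the sharp energy accounting that delivers the exact $E_{\min}$ in (iii) while absorbing the two-dimensional contribution of the junction region near $x_2=0$; everything else is driven by (H3) for positivity, (H1) for decay, and (H5) for selection.
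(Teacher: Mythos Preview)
Your overall architecture matches the paper's: minimize on expanding domains, obtain positivity and a uniform exponential bound, pass to the limit, and identify the $x_2\to\pm\infty$ behavior via sharp energy bounds and (H5). The positivity step, however, is miscast. The function $Q$ in (H3) is defined only on $\bar{\mathcal D}=\{u_1\ge 0\}$, so forming $v=Q(u_R)$ already presupposes what you want to prove; and even granting this, subharmonicity of a nonnegative $v$ yields an \emph{upper} bound on $v$, not $u_1\ge0$. In the paper, positivity comes from (H2) alone and is obtained at the level of competitors, not via the equation: one builds a folding map $\Lambda:\R^2\to\bar F$ out of the reflections in $\mathcal{H}^2_2$, checks that $\Lambda$ is $1$-Lipschitz and that $W\circ\Lambda=W$, and notes that $u\mapsto\Lambda\circ u$ (extended equivariantly) preserves admissibility without increasing $J$. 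Hence one may always select a positive minimizer. The actual role of (H3) is different and comes afterward: once $u_R$ is positive, $Q(u_R)$ is subharmonic on $\{0\le x_1\le\eta R\}$, hence dominated by a linear function of $x_1$; an iteration alternating this linear bound with the local exponential estimate from (H1) (which is valid only inside the constraint region $C^+_{R,\mu,\eta}$) is what drives the estimate down to $x_1\ge\delta^\ast$ with constants independent of $R$. This is not a single barrier argument.

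For the uniqueness of the $x_2$-limit, your ``discreteness of $\mathcal C$ plus continuity of $x_2\mapsto u(\cdot,x_2)$'' does not by itself rule out oscillation, since the continuous path is not confined to $\mathcal C$; only its $\omega$-limit set is. The paper supplies a concrete substitute: by equivariance and positivity every relevant connection $\theta$ has $\theta_1(0)=0$, $\dot\theta_2(0)=0$, $\dot\theta_1(0)\ge0$, and the first integral $\tfrac12|\dot\theta(0)|^2=W(\theta(0))$ then determines $\dot\theta_1(0)$ from $\theta_2(0)$; thus $\theta$ is encoded by the single real number $\theta_2(0)$. If two distinct subsequential limits $\theta^A,\theta^B$ existed, choose $c$ strictly between $\theta^A_2(0)$ and $\theta^B_2(0)$ corresponding to no element of the discrete set $\mathcal C$; the intermediate value theorem applied to $x_2\mapsto u_2(0,x_2)$ produces a third sequence whose subsequential limit would be a connection with $\theta_2(0)=c$, a contradiction. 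The finiteness of $\mathcal C\setminus\mathcal M$ in (H5) is used separately, to guarantee that at least one subsequential limit lies in $\mathcal M$.
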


\begin{remark}
Due to the infinity of $J$ mentioned above, the solution is constructed as a limit of problems on strips of width $2R$, with $R \to \infty$. The main difficulty is showing that the limit
\[
u(x) = \lim_{R \to \infty} u_R (x)
\]
is nontrivial. The first enemy is $u \equiv 0$, but this is eliminated by the estimate in (ii). Another concern is that $u$ could coincide with one of the connections $e^+$ or $e^-$. This possibility is excluded by symmetry. This last point is considerably more involved in Alama, Bronsard, and Gui \cite{1}, since there only the symmetry with respect to the $u_2$-axis is assumed. Our method of proof in broad lines follows \cite{6}.
\end{remark}

Two related open problems are the following.

\subsubsection*{Multiplicity question.} If $\mathcal{M}$ has $k$ pairs of connections, then is it true that problem \eqref{problem} has $k$ distinct solutions $u_k$ with
\begin{align*}
&u_i(x_1,x_2) \to a^\pm, \text{ as } x_1 \to \pm\infty, \\
&u_i(x_1,x_2) \to e^\pm_i (x_2), \text{ as }x_2 \to \pm\infty,
\end{align*}
for $i = 1, \ldots, k$?

\subsubsection*{Diffeomorphism question} Is it true that the solution constructed in this paper is a global diffeomorphism, one-to-one, of $\R^2$ {\em onto} the region on the $u_1$--$u_2$ plane bounded by $e^+$ and $e^-$? Below we provide explicit examples of $W$. We note that the region bounded by the connections is convex for certain choices of the parameters.

\medskip

We conclude this introduction by giving examples of potentials $W$ satisfying the hypotheses (H1)--(H5).

\begin{example}\label{example1}
Consider the potential
\begin{eqnarray}\label{potential1}
W_1 (z) = \left| \frac{z^2-1}{z^2+\varepsilon^2} \right|^2, \text{ for } 0 <\varepsilon<\infty, 
\end{eqnarray}
where $z=u_1+iu_2$, $u=(u_1,u_2)$. The potential $W_1$ has two global minima at $a^\pm = (\pm1, 0)$ and obviously has the symmetry (H2). It has been shown in \cite{3} that there exist exactly three trajectories connecting $-1$ with $1, e^\varepsilon_+, e^\varepsilon_-$, and $e^\varepsilon_0$, with $e^\varepsilon_+, e^\varepsilon_-$ reflections of each other with respect to the $u_1$-axis and with $e^\varepsilon_0$ lying on the $u_1$-axis (see Figure \ref{figure2}a). Moreover, $E(e^\varepsilon_\pm) < E(e^\varepsilon_0)$ for $0<\varepsilon<\varepsilon^\ast = 0.4416 \ldots$ and $E(e^\varepsilon_\pm) > E(e^\varepsilon_0)$ for $\varepsilon > \varepsilon^\ast$. In more detail, the trajectories $e^\varepsilon_\pm$ are determined by the equation
\[
u_2 + \frac{1+\varepsilon^2}{4\varepsilon} \ln\left(\frac{(u_2-\varepsilon)^2 + u^2_1}{(u_2+\varepsilon)^2+u^2_1}\right) = 0
\]
and
\[
E(e^\varepsilon_0) = \frac{1}{\sqrt{2}}\left(\frac{1+\varepsilon^2}{\varepsilon}(\pi-\arctan\varepsilon)-\varepsilon\right), \ E(e^\varepsilon_\pm)=\frac{1}{\sqrt{2}}\left( 2 + \frac{2(1+\varepsilon^2)}{\varepsilon} \arctan\varepsilon \right).
\]

Modifying $W_1$ near the poles $\pm \varepsilon i$ allows us to produce a $C^\infty$ potential $\tilde{W}$ possessing the above trajectories. Clearly, the potential $\tilde{W}$ satisfies the hypotheses (H1), (H2), (H4), and (H5). For explaining the $Q$-monotonicity of $W$, condition (H3), we consider for the
moment the hypothesis
\[
W_u(u) \cdot (u-a^+) \geq 0, \text{ for } u \in D. \tag{H3*}
\]
Hypothesis (H3*) corresponds to the choice $Q(u)=|u-a^+|$ and states the monotonicity of $W$ along rays emanating from $a^+$.

For example, in the case when $W$ is a center at the origin, (H3$^\ast$) is never satisfied. On the other hand, the existence of a convex $Q$ which satisfies (H3) appears very plausible for centers and saddles but it would require proof. Our theorem produces an entire solution which appears to map the plane into the region bounded by the two symmetric connections.

\begin{figure}
\includegraphics[scale=0.825]{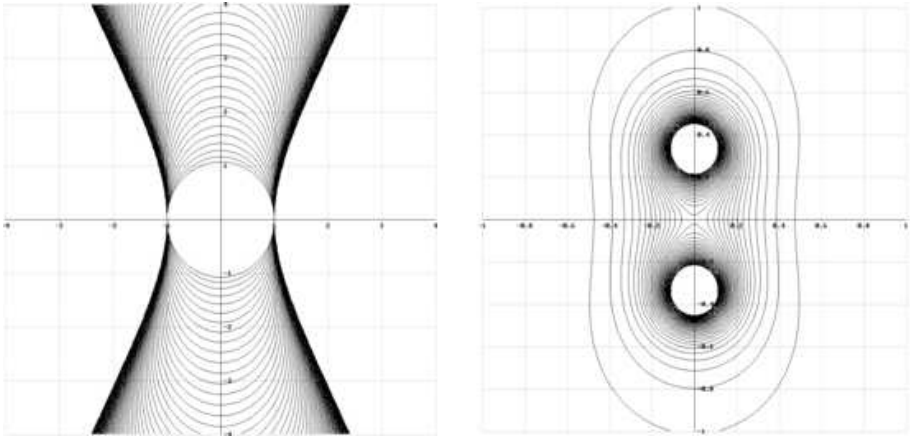}
\caption{The figure on the left shows a computation of the trajectories $e^\varepsilon_\pm$ for the potential $W_1$, for $0<\varepsilon<\infty$. We note that $e^\varepsilon_\pm$ tend to the unit circle, as
$\varepsilon \to 0$, while their envelope, as $\varepsilon \to \infty$, is given by $u^2_1 = u^2_2/3 + 1$. The disc-like boundary shown in the figure corresponds to $\varepsilon = \sqrt{3}/6 < \varepsilon^\ast = 0.4416 \ldots$ The region bounded by $e^\varepsilon_\pm$ ceases to be convex for $e = \sqrt{3}$. On the right we show the level sets of $W_1(z)$ for $\varepsilon = \sqrt{3}/6 < \varepsilon^\ast = 0.4416\ldots$ The existence of a $Q$ such that $Q_u \cdot W_u \geq 0$ in $D$ is geometrically plausible. (Numerical results due to G.\ Paschalides.)}
\label{figure2}
\end{figure}
\end{example}

\begin{example}
Consider the potential
\begin{equation}\label{potential2}
W_2 (z) = \left|\frac{z^2-1}{z^2+\varepsilon^2_1} \right|^2 \left| \frac{z^2-1}{z^2+\varepsilon^2_2} \right|^2, \text{ for } 0 \leq \varepsilon_1 \leq \varepsilon_2 < \infty, 
\end{equation}
where $z = u_1+iu_2$, $u=(u_1,u_2)$. The potential $W_2$ has global minima at $a^\pm=(\pm1,0)$ and obviously satisfies (H2). Applying the theory in \cite{3}, we get that for $\varepsilon_1>0$, there exist precisely five connecting orbits between
$a^+$ and $a^-$, which we denote by $e^1_\pm(\varepsilon_1,\varepsilon_2), e^2_\pm(\varepsilon_1, \varepsilon_2)$, and $e_0(\varepsilon_1,\varepsilon_2)$. We denote by $e_0$ the `scalar' connection mentioned in (H4) that lies on the $u_1$-axis, while the rest of the connections are symmetric in pairs with respect to the reflection $u_2 \mapsto -u_2$ (see Figure \ref{figure3}a) and are determined by the equation
\[
u_2 - \frac{(\varepsilon^2_1+1)^2}{4\varepsilon_1(\varepsilon_2-\varepsilon^2_1)}\ln \left(\frac{\varepsilon_1-u_2)^2+u^2_1}{(u_2+\varepsilon_1)^2+u^2_1}\right) + \frac{(\varepsilon^2_1+1)^2}{4\varepsilon_2(\varepsilon_2-\varepsilon^2_1)}\ln\left(\frac{(\varepsilon_2-u_2)^2+u^2_1}{(u_2+\varepsilon_2)^2+u^2_1}\right) = 0.
\]
In addition, by applying \cite{3}, the action of each orbit can be calculated explicitly.
\begin{align*}
E_0 := E(e_0) = \frac{1}{\sqrt{2}} &\bigg| 2-\frac{(\varepsilon^2_1+1)^2}{\varepsilon_1(\varepsilon^2_2-\varepsilon^2_1)}\arctan\varepsilon_1 + \frac{(\varepsilon^2_2+1)^2}{\varepsilon_2(\varepsilon^2_2-\varepsilon^2_1)}\arctan\varepsilon_2 \\ 
&- \frac{(\varepsilon^2_2+1)^2\pi}{2\varepsilon_2(\varepsilon^2_2-\varepsilon^2_1)} + \frac{(\varepsilon^2_1+1)^2\pi}{2\varepsilon_1(\varepsilon^2_2-\varepsilon^2_1)}\bigg|,
\end{align*}
\begin{align*}
E_{\textrm{I}} := E(e^1_\pm) = \frac{1}{\sqrt{2}} &\bigg| 2-\frac{(\varepsilon^2_2+1)^2}{\varepsilon_2(\varepsilon^2_2-\varepsilon^2_1)}\arctan\varepsilon_2 \\ 
&+ \frac{(\varepsilon^2_1+1)^2}{\varepsilon_1(\varepsilon^2_2-\varepsilon^2_1)}\arctan\varepsilon_1+\frac{(\varepsilon^2_2+1)^2\pi}{2\varepsilon_2(\varepsilon^2_2-\varepsilon^2_1)} \bigg|,
\end{align*}
\[
E_{\textrm{II}} := E(e^2_\pm) = \frac{1}{\sqrt{2}} \left|2 + \frac{(\varepsilon^2_2+1)^2}{\varepsilon_2(\varepsilon^2_2-\varepsilon^2_1)}\arctan\varepsilon_2-\frac{(\varepsilon^2_1+1)^2}{\varepsilon_1(\varepsilon^2_2-\varepsilon^2_1)}\arctan\varepsilon_1\right|.
\]

\begin{figure}
\includegraphics[scale=0.825]{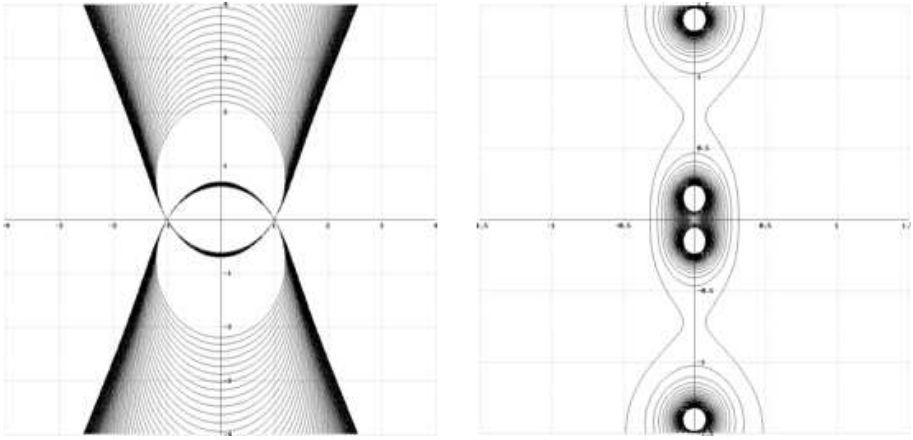}
\caption{The figure on the left shows a computation of the trajectories $e^1_\pm(\varepsilon_1, \varepsilon_2),e^2_\pm(\varepsilon_1,\varepsilon_2)$ for the potential $W_2$, for $\varepsilon_1$ fixed and equal to $\textstyle{\varepsilon^\ast_1 = \sqrt{\frac{\sqrt{6}-1}{2}} - \frac{\sqrt{6}-1}{2}}$ and $\varepsilon_2=(\sigma (\varepsilon^\ast_1), +\infty)$. It can be seen that the inner region approaches a limiting shape as $\varepsilon_2 \to 0$. On the right are the level sets of $W_2(z)$ for $\varepsilon_1 = \varepsilon^\ast_1$ and $\varepsilon_2 = \sigma(\varepsilon^\ast_1)$. (Numerical results due to G.\ Paschalides.)}
\label{figure3}
\end{figure}

We observe that $e^2_0 \cup e^2_+$ form the boundary of a region which increases unboundedly as $\varepsilon_2 \to \infty$ but approaches a limiting region as $\varepsilon_2 \to 0$, always enclosing all poles $(0,\pm\varepsilon_k i)$, for $k=1,\, 2$. On the other hand, $e^1_+ \cup e^1_-$ form the boundary of an interior region which contains only one pair of poles and approaches limiting regions as $\varepsilon_2 \to 0$ and $\varepsilon_2 \to \infty$.

We note that
\[
E_0 (\varepsilon_1, \varepsilon_2) \to \left\{
\begin{array}{l}
\infty, \text{ as } \varepsilon_1 \to 0, \\
\text{finite limits, as } \varepsilon_1,\, \varepsilon_2 \to \varepsilon^\ast \neq 0\\ 
\text{and also as } \varepsilon_1 \to\infty \text{ or } \varepsilon_2 \to \infty.
\end{array}\right.
\]
\[
E_{\textrm{I}} (\varepsilon_1, \varepsilon_2) \to \left\{ 
\begin{array}{l}
\infty, \text{ as } \varepsilon_2 - \varepsilon_1 \to 0, \\
\text{finite limits, as } \varepsilon_1 \to 0,\, \varepsilon_2 \to \infty.
\end{array}\right.
\]
\[
E_{\mathrm{II}} (\varepsilon_1, \varepsilon_2) \to \left\{
\begin{array}{l}
\text{finite limits, as } \varepsilon_2 \to 0, \\
\infty, \text{ as } \varepsilon_2 \to \infty.
\end{array}\right.
\]
From the previous relations, we get that
\begin{align*}
&E_{\mathrm{I}} > E_{\mathrm{II}}, \text{ as } \varepsilon_2 \to 0 \text{ (with $\varepsilon_1$ held
constant)}, \\
&E_{\mathrm{I}} < E_{\mathrm{II}}, \text{ as } \varepsilon_2 \to \infty \text{ (with $\varepsilon_1$ held
constant)}.
\end{align*}
It then follows easily that there exists a continuous function
$\varepsilon_1 \mapsto \sigma^\ast (\varepsilon_1)$ and $\varepsilon^\ast_1 > 0$ such that
\begin{eqnarray}\label{relations}
E_{\mathrm{II}} (\varepsilon_1,\sigma^\ast(\varepsilon_1)) = E_{\mathrm{I}} (\varepsilon_1, \sigma^\ast(\varepsilon_1)) < E_0(\varepsilon_1, \sigma^\ast(\varepsilon_1)),
\end{eqnarray}
for $0 \leq \varepsilon_1 < \varepsilon^\ast_1$. Thus, $\# m = 4$ and $\# C = 5$.

The theorem applies to $C^\infty$ modifications of $W_2$ with $\varepsilon_2 = \sigma^\ast(\varepsilon_1)$, $0 \leq \varepsilon_1 < \varepsilon^\ast_1$, and produces an $\mathcal{H}^2_2$ equivariant solution, apparently not unique, which has the property that
\[
u (x_1, x_2) \to e^i_+(x_1) \text{ and } u(x_1,-x_2) \to e^i_-(x_1), \text{ as } x_2 \to +\infty,
\]
for $i = 1,\, 2$. We expect that for the example at hand it should be possible to prove that there exist two distinct solutions $u^i$ satisfying
\[
\lim_{x_2 \to \pm\infty} u^i (x_1, x_2) = e^i_\pm(x_1), \text{ for } i=1,2,
\]
each mapping the plane $\R^2$ diffeomorphically to the region bounded by the corresponding connections.
\end{example}

\section{The constrained problem (H1)}\label{constrained-sec}
Let 
\[
\Omega_{R,\mu} = \left\{ (x_1, x_2) \mid |x_1| < \mu R,\, |x_2| < R \right\}
\]
 and
\[
C^+_{R,\mu,\eta} = \left\{ (x_1, x_2) \in \Omega_{R,\mu} \mid \eta R \leq x_1 \leq \mu R \right\},
\]
where $R \in [1,\infty)$, $\mu \in [1,+\infty]$, $1/2 < \eta < \mu$, and 
\[
C_{R, \mu, \eta} = \{ (x_1, x_2) \in \Omega_{R,\mu} \mid -\mu R \leq x_1 \leq -\eta R\}.
\]
Finally, the domain $\Omega_{R,\infty}$, for $\mu = \infty$ is the strip $|x_2|<R$. Consider the equivariant Sobolev space
\[
W^{1,2}_E (\Omega_{R, \mu}) = \{u: \Omega_{R,\mu} \to \R^2  \mid u \in W^{1,2}(\Omega_{R,\mu}),\, u\ \mathcal{H}^2_2 \text{-equivariant}\}.
\]
We consider, for $r<r_0$ fixed, the set
\begin{equation}\label{u-crm}
U^c_{R, \mu} := \{ u \in W^{1,2}_E (\Omega_{R,\mu}) \mid |u(x)-\alpha^\pm| \leq r,\, \text{ a.e.\ } x \in C^\pm_{R,\mu,\eta}\}
\end{equation}
and the functional
\[
J_{R,\mu}(u) = \int_{\Omega_{R,\mu}} \left\{ \frac{1}{2} |\nabla u|^2 + W(u) \right\} \dd x.
\]

%\begin{figure}
%\caption{The sets $C^+_{R,\mu,\eta}$ and $C^-_{R,\mu,\eta}$.}
%\label{figure4}
%\end{figure}

\begin{proposition}\label{constrained}
Let $1 \leq R < \infty$, $1 \leq \mu \leq \infty$, $1/2 < \eta \leq \mu$, and $r<r_0$ fixed, where $r_0$ as in {\em (H1)}. Then, the problem
\begin{equation}\label{min-problem}
\min_{U^c_{R,\mu}} \int_{\Omega_{R,\mu}} \left\{ \frac{1}{2} |\nabla u|^2 + W(u) \right\} \dd x := \min_{U^c_{R,\mu}} J_{R,\mu} 
\end{equation}
has a solution $u_{R,\mu} \in W^{1,2}_E (\Omega_{R,\mu})$ for $\mu < \infty$ and $u_{R,\infty} \in (W^{1,2}_E)_{\textrm{loc}} (\Omega_{R,\infty})$
\end{proposition}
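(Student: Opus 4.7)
The plan is to apply the direct method of the calculus of variations, first for $\mu<\infty$, then to pass to $\mu=\infty$ by a diagonal argument on $\mu_n\to\infty$.

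For $\mu<\infty$, $\Omega_{R,\mu}$ is bounded. To see that $U^c_{R,\mu}$ is nonempty with finite energy, take $v(x_1,x_2):=e_0(x_1)$; this lies in $W^{1,2}_E(\Omega_{R,\mu})$ since the domain has finite height $2R$, and by the exponential decay of $e_0$ it meets the pointwise constraint on $C^\pm_{R,\mu,\eta}$, possibly after a small modification near $|x_1|=\eta R$. Thus $\inf J_{R,\mu}<\infty$. For a minimizing sequence $\{u_n\}$, the key step is an $L^\infty$ bound obtained by truncation: replace each $u_n$ by $\tilde u_n:=\pi(u_n)$, where $\pi$ is the nearest-point projection onto the closed convex $\mathcal{H}^2_2$-symmetric set $C_0$ of (H2). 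Since $C_0$ is $\mathcal{H}^2_2$-symmetric, $\pi$ commutes with the action of $\mathcal{H}^2_2$, so $\tilde u_n$ remains equivariant; since $\pi$ is $1$-Lipschitz, $|\nabla\tilde u_n|\le|\nabla u_n|$ a.e.; and by (H2), $W(\pi(u))\le\max_{\partial C_0}W\le W(u)$ whenever $u\notin C_0$, so $W(\tilde u_n)\le W(u_n)$ pointwise. Choosing $r<r_0$ small enough that $\overline B_r(a^\pm)\subset C_0$, the projection leaves $u_n$ unchanged on $C^\pm_{R,\mu,\eta}$, hence the pointwise constraint is preserved; thus $\tilde u_n\in U^c_{R,\mu}$ with $J_{R,\mu}(\tilde u_n)\le J_{R,\mu}(u_n)$, and we may assume $u_n(\Omega_{R,\mu})\subset C_0$.

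The rest of the bounded-domain case is routine: the uniform $L^\infty$ plus energy bound give a uniform $W^{1,2}$ bound, so up to a subsequence $u_n\rightharpoonup u_{R,\mu}$ weakly in $W^{1,2}$, strongly in $L^2$ (Rellich--Kondrachov), and pointwise a.e. Equivariance and the pointwise constraint pass to the limit, and weak lower semicontinuity of the Dirichlet term combined with dominated convergence for $\int W(u_n)\to\int W(u_{R,\mu})$ yield $J_{R,\mu}(u_{R,\mu})=\inf J_{R,\mu}$. For $\mu=\infty$, take $u_n:=u_{R,n}$ from the first part; these are uniformly $L^\infty$-bounded. On any compact $K\subset\Omega_{R,\infty}$, a uniform bound on $\int_K|\nabla u_n|^2$ follows from a standard cut-and-paste comparison: choose a cutoff $\psi$ supported in a slightly larger compact $K'\supset K$ with $\psi\equiv 1$ on $K$, and compare $u_n$ with the competitor $v_n:=(1-\psi)u_n+\psi e_0$, which lies in $U^c_{R,n}$ for $n$ large. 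Minimality $J_{R,n}(u_n)\le J_{R,n}(v_n)$ together with the $L^\infty$ bounds yields a uniform estimate on $\int_{K'}|\nabla u_n|^2$ via a hole-filling absorption in the shell $K'\setminus K$. A diagonal extraction then produces $u_{R,\infty}\in(W^{1,2}_E)_{\mathrm{loc}}(\Omega_{R,\infty})$ converging weakly on compacts, and the same comparison applied to arbitrary compactly supported admissible variations of $u_{R,\infty}$ shows it is a local minimizer in the appropriate sense.

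The main obstacle is making the truncation step in the third paragraph actually work: one must verify that the nearest-point projection onto $C_0$ simultaneously preserves the pointwise constraint on $C^\pm_{R,\mu,\eta}$ and the $\mathcal{H}^2_2$-equivariance. The inclusion $\overline B_r(a^\pm)\subset C_0$ and the $\mathcal{H}^2_2$-symmetry of $C_0$ are exactly what render these two requirements compatible; once they are arranged, everything else is standard.
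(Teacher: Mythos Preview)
Your argument for $\mu<\infty$ is correct and in fact a bit more careful than the paper's. The paper simply asserts the $L^2$ bound $\int_{\Omega_{R,\mu}}|u_n|^2<C(R,\mu)$ (implicitly from a Poincar\'e inequality using the constraint on $C^\pm_{R,\mu,\eta}$) and then applies Fatou to the $W$-term; you instead project onto $C_0$ to get an $L^\infty$ bound and use dominated convergence. Both work; your truncation is what the paper eventually does anyway in its Lemma~\ref{a-priori-bound-lemma}, so you are just front-loading that step.

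For $\mu=\infty$ there are two issues. First, the cut-and-paste/hole-filling comparison is unnecessary: your own competitor $e_0$ (or the paper's $u_{\text{aff}}$) already gives $J_{\Omega_{R,n}}(u_n)\le 2R\,E(e_0)$ uniformly in $n$, and since $W\ge 0$ this immediately bounds $\int_K|\nabla u_n|^2$ for every compact $K\subset\Omega_{R,\infty}$. The hole-filling iteration would in any case require exactly this global bound to terminate, so it buys nothing.

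Second, and this is the real gap: you conclude only that $u_{R,\infty}$ is a ``local minimizer in the appropriate sense'', i.e.\ minimal against compactly supported perturbations. The proposition asserts that $u_{R,\infty}$ solves $\min_{U^c_{R,\infty}} J_{R,\infty}$, a genuinely global statement. Your comparison with compactly supported variations of $u_{R,\infty}$ does not establish this, because a generic competitor $v\in U^c_{R,\infty}$ need not agree with $u_{R,\infty}$ outside any compact set. The fix is short but must be stated: for arbitrary $v\in U^c_{R,\infty}$, its restriction to $\Omega_{R,n}$ lies in $U^c_{R,n}$ (since $C^\pm_{R,n,\eta}\subset C^\pm_{R,\infty,\eta}$), so $J_{\Omega_{R,n}}(u_n)\le J_{\Omega_{R,n}}(v)$; send $n\to\infty$, using Fatou/weak lower semicontinuity on the left and monotone convergence on the right, to obtain $J_{\Omega_{R,\infty}}(u_{R,\infty})\le J_{\Omega_{R,\infty}}(v)$. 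The paper's route is slightly different---it works directly with a minimizing sequence for $J_{R,\infty}$ rather than with the bounded-domain minimizers $u_{R,n}$---but the missing lower-semicontinuity-vs-arbitrary-competitor step is the same either way.
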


\begin{proof}
For $\mu < \infty$, we fix $R$ and $\mu$ and define the affine function $u_{\text{aff}} : \Omega_{R,\mu} \to \R^2$, such that
\begin{equation}\label{u-aff}
u_{\text{aff}}(x) := 
\left\{\begin{array}{ll}
a^-, & \text{for } x_1 \in [-\mu R, -1], \medskip\\
\dfrac{1-x_1}{2} a^- + \dfrac{1+x_1}{2} a^+, & \text{for } x_1 \in [-1,1], \medskip\\
a^+, & \text{for } x_1 \in [1,\mu R].
\end{array}\right.
\end{equation}
The function $u_{\text{aff}}$ belongs to $U^c_{R,\mu}$ for every $R\geq 1$, $\mu \geq \eta$, and satisfies the estimate
\begin{equation}\label{u-aff-estimate}
J_{R,\mu} (u_{\text{aff}}) < CR.
\end{equation}
Since $W \geq 0$, it follows that $0 \leq \inf_{U^c_{R,\mu}} J_{R,\mu} < J_R(u_{\text{aff}}) < CR$, where,
without loss of generality, we assumed the middle inequality to be strict. Let $\{u_n\}$ be a minimizing sequence of $J_{R,\mu}$, that is, $J_{R,\mu}(u_n) \to \inf_{U^c_{R,\mu}} J_{R,\mu}$. For the sequence $\{u_n\}$ we have the following estimates
\begin{eqnarray}\label{un-estimates}
\left\{\begin{array}{ll}
(i) & \displaystyle{\int_{\Omega_{R,\mu}} \dfrac{1}{2} |\nabla_{u_n}|^2\, \dd x < J_{R,\mu} (u_{\text{aff}}) < CR,} \medskip\\
(ii) & \displaystyle{\int_{\Omega_{R,\mu}} |u_n|^2\, \dd x < C(R,\mu),}
\end{array} \right.
\end{eqnarray}
where in \eqref{un-estimates}(ii) $C(R,\mu)$ denotes a constant depending on $R$, $\mu$. Then, there exists a subsequence, by weak compactness, which we still denote by $\{u_n\}$, such that
\[
u_n \rightharpoonup u, \text{ weakly in } W^{1,2}_E(\Omega_{R,\mu}).
\]
By lower semi-continuity in $L^2_E(\Omega_{R,\mu})$, it follows that
\begin{equation}\label{lim-inf-1}
\liminf_{n\to\infty} \int_{\Omega_{R,\mu}} |\nabla u_n|^2\, \dd x \geq \int_{\Omega_{R,\mu}} |\nabla u|^2\, \dd x 
\end{equation}
and by the compactness of the embedding $W^{1,2}_E(\Omega_{R,\mu}) \subset\subset L^2_E(\Omega_{R,\mu})$ and Fatou's lemma, we have
\begin{equation}\label{lim-inf-2}
\liminf_{n\to\infty} \int_{\Omega_{R,\mu}} W(u_n)\, \dd x \geq \int_{\Omega_{R,\mu}} W(u)\, \dd x.
\end{equation}

For handling the $\mu=\infty$ case, consider a family of rectangles $[-m,m]\times[-R,R]$, $m=1,3,\ldots$ First, construct a sequence minimizing $J$ over $W^{1,2}(\Omega_{R,\infty})$ functions restricted to the $m=1$ rectangle. Next, consider a subsequence of the previous sequence restricted to the $m=2$ rectangle and minimizing $J$ over $W^{1,2}(\Omega_{R,\infty})$ functions restricted to the
$m=2$ rectangle, and so on (via (\eqref{lim-inf-1}-type of estimates).

By a diagonal argument one obtains a subsequence $\{u_m\}$ which converges weakly in $W^{1,2}_{\textrm{loc}}(\Omega_{R,\infty})$ to some $u$. Utilizing the compactness of the embedding $W^{1,2}_{\textrm{loc}} \hookrightarrow L^2_{\textrm{loc}}$, we may assume that $u_m \to u$ a.e.\ (at the expense of taking a further subsequence). Now $\{\inf J_{R,\mu}\}$ is a decreasing sequence in $\mu$ and clearly $\inf J_{R,\mu} \geq \inf J_{R,\infty}$. That actually $\inf J_{R,\mu} \to \inf J_{R,\infty}$ as $\mu \to\infty$ follows from the fact that $C^\infty$ functions with compact support in $\R \times [-R,R]$ are dense in $W^{1,2}(\Omega_{R,\infty})$. Therefore,
\begin{align*}
\inf J_{R,\infty} &= \liminf_{\mu \to \infty} \int_{\Omega_{R,\mu}} \left( \frac{1}{2} |\nabla u_m|^2 + W(u_m) \right) \dd x \medskip\\
&\geq \int_{\Omega_{R,\infty}} \left( \frac{1}{2} |\nabla u|^2 + W(u) \right) \dd x,
\end{align*}
where in the last inequality we utilize Fatou's lemma. The proof is complete.
\end{proof}

\section{The positivity property}
Let $V$ be a real Euclidean vector space, and let $O(V)$ stand for the orthogonal group. For every finite subgroup $G$ of $O(V)$ a {\em fundamental region} is defined as a set $F$ with the following properties.
\begin{enumerate}
\item $F$ is open in $V$,
\item $F \cap TF = \varnothing$ if $\id \neq T \in G$,
\item $V = \cup \{(\overline{TF}) \mid T \in G\}$,
\end{enumerate}
where with the overbar we denote the closure of the set. The fundamental region $F$ can be chosen to be convex, actually a simplex (see \cite{10}). More generally, if $X$ is a subset of $V$, invariant under $G$, then a subset $D$ is a {\em fundamental domain} if it is of the form
\[
D=X\cap F.
\]
If $G=\mathcal{H}^2_2$, a fundamental region is $F=\{(u_1,u_2) \mid u_1 \geq 0, u_2\geq 0\}$. For $X=\Omega_{R,\mu}$, we take as a fundamental domain the set $\Omega^1_{R,\mu} = \Omega_{R,\mu} \cap F$.

%\begin{figure}
%\caption{The fundamental region $F$ of $\mathcal{H}^2_2$ and the fundamental domain $\Omega^1_{R,\mu} = \Omega_{R,\mu} \cap F$.}
%\label{figure-fundamental}
%\end{figure}

\begin{proposition}[H2]\label{proposition-fundamental}
Let $u_{R,\mu}$, for $R,\mu \in [1,\infty]$, be the minimizing function of the constrained problem
\eqref{min-problem}. Then, there exists $u^\ast_{R,\mu} \in U^c_{R,\mu}$ with the properties
\begin{equation}\label{u*-properties}
\left\{\begin{array}{l}
J(u^\ast_{R,\mu}) \leq J(u_{R,\mu}), \medskip\\
u^\ast_{R,\mu}(\Omega^1_{R,\mu}) \subset \overline{F}.
\end{array}\right.
\end{equation}
\end{proposition}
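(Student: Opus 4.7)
The plan is to construct $u^*_{R,\mu}$ by folding $u_{R,\mu}$ componentwise into $\overline{F}$ on the fundamental subdomain $\Omega^1_{R,\mu}$, and then extending by equivariance. Writing the components as $u_{R,\mu} = (u_1, u_2)$, I set
\[
u^*_{R,\mu}(x) := \bigl(|u_1(x)|, |u_2(x)|\bigr), \qquad x \in \Omega^1_{R,\mu},
\]
and extend the definition to all of $\Omega_{R,\mu}$ by the rule $u^*_{R,\mu}(gx) := g\,u^*_{R,\mu}(x)$ for every $g \in \mathcal{H}^2_2$.

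First I would verify that this extension is unambiguous. Conflicts can only occur on the coordinate axes $\{x_1 = 0\} \cup \{x_2 = 0\}$, which are fixed by the reflections $g_1, g_2$ generating $\mathcal{H}^2_2$. The equivariance of $u_{R,\mu}$ applied at an axis point forces $u_1(0, x_2) = 0$ and $u_2(x_1, 0) = 0$, so $u_{R,\mu}$ already takes values in the reflection-fixed axis in the target and the folded map agrees with $u_{R,\mu}$ there. Hence $u^*_{R,\mu}$ is well defined and $\mathcal{H}^2_2$-equivariant. By the standard truncation property of Sobolev functions, $|u_i| \in W^{1,2}$ with $\bigl|\nabla|u_i|\bigr| = |\nabla u_i|$ almost everywhere, so $u^*_{R,\mu} \in W^{1,2}_E(\Omega_{R,\mu})$. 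To check admissibility, observe that on $C^+_{R,\mu,\eta}$ the inequality $|u_{R,\mu} - a^+| \leq r$ (with $r < a$) forces $u_1 > 0$, hence $u^*_{R,\mu} = (u_1, |u_2|)$ there and
\[
|u^*_{R,\mu} - a^+|^2 = (u_1 - a)^2 + u_2^2 = |u_{R,\mu} - a^+|^2 \leq r^2;
\]
the corresponding bound on $C^-_{R,\mu,\eta}$ follows by equivariance, so $u^*_{R,\mu} \in U^c_{R,\mu}$.

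The energy comparison reduces to a pointwise identity on $\Omega^1_{R,\mu}$: the Dirichlet integrand is preserved because $\bigl|\nabla|u_i|\bigr|^2 = |\nabla u_i|^2$ a.e., and hypothesis (H2) yields $W(|u_1|, |u_2|) = W(u_1, u_2)$ pointwise. Summing over the four isometric images of $\Omega^1_{R,\mu}$ under $\mathcal{H}^2_2$ and using the $\mathcal{H}^2_2$-invariance of the full integrand, we obtain $J_{R,\mu}(u^*_{R,\mu}) = J_{R,\mu}(u_{R,\mu})$, which gives the desired inequality, while the inclusion $u^*_{R,\mu}(\Omega^1_{R,\mu}) \subset \overline{F}$ is immediate by construction. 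I do not expect a serious obstacle here; the only point requiring a little care is the pointwise identity for $\nabla|u_i|$ on the level set $\{u_i = 0\}$, which is a standard consequence of the Stampacchia chain rule applied separately to $\{u_i > 0\}$ and $\{u_i < 0\}$, and the remainder of the argument is simply a direct exploitation of the dihedral symmetry of $W$.
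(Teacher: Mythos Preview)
Your proof is correct and follows essentially the same approach as the paper: the paper defines the folding map $\Lambda$ abstractly via the group action (sending $u$ to $g^{-1}u$ for the $g$ with $u\in g(F)$), which for $\mathcal{H}^2_2$ is exactly your coordinate map $(u_1,u_2)\mapsto(|u_1|,|u_2|)$, and then extends by equivariance. The only cosmetic difference is that the paper bounds the gradient term via the $1$-Lipschitz property of $\Lambda$, whereas you use the pointwise identity $\bigl|\nabla|u_i|\bigr|=|\nabla u_i|$ a.e., which in fact yields the slightly sharper conclusion $J(u^*)=J(u)$; and the paper's careful case-check of continuity across the symmetry axes is replaced by your (equivalent) observation that equivariance forces $u_1(0,x_2)=0$ and $u_2(x_1,0)=0$ so the folded map glues consistently.
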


\begin{proof}
Set
\begin{equation}\label{lambda}
\Lambda u := 
\left\{\begin{array}{ll}
u, & u\in F \medskip\\
T^{-1}_1u, & u\in T_1(F) \medskip\\
(T_2T_1)^{-1}u, & u\in T_2T_1(F)=S(F) \medskip\\
T^{-1}_2u, & u\in T_2(F).
\end{array}\right.
\end{equation}
Clearly, $\Lambda$ maps $\R^2$ into $F$. Also, it can be checked that
\begin{equation}\label{lambda-contraction}
|\Lambda(u_A) - \Lambda(u_B)| \leq |u_A - u_B|,
\end{equation}
where $|\cdot|$ is the Euclidean norm.

Next, we define the operator
\begin{equation}\label{l-operator}
(Lu)(x) := \Lambda u(x), \text{ for } x\in\Omega^1_{R,\mu},
\end{equation}
and extend by equivariance on $\Omega_{R,\mu}$. We will show that
\begin{equation} \label{l-preserves}
L : U^c_{R,\mu} \to U^c_{R,\mu},
\end{equation}
which means that $L$ preserves Sobolev equivariance and the constraint.

We begin by verifying that $L$ preserves Sobolev equivariance. By standard approximation arguments, the only source of difficulty is the possible loss of continuity along the symmetry lines where the gluing in the definition of $L$ takes place. We check two cases and leave the rest to the reader.

\begin{figure}
\includegraphics[scale=0.22]{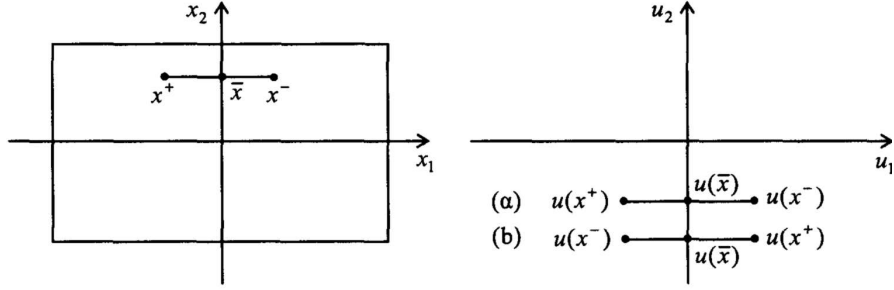}
\caption{The points $x^+$, $\bar{x}$, $x^-$, and the corresponding $u(x^-)$, $u(\bar{x})$, $u(x^+)$.}
\label{fig:x-points}
\end{figure}

We consider $x^+$, $\bar{x}$, $x^-$ as in Figure \ref{fig:x-points}a with $T_1 x^+ = x^-$ and $|x^+ - x^-|$ small, and $T_1 \bar{x} = \bar{x}$. We would like to show that $|(Lu)(x^+) - (Lu)(x^-)|$ is small for
$|u(x^+) - u(x^-)|$ small. By equivariance, $T_1(u(\bar{x})) = u(T_1\bar{x}) = u(\bar{x})$ and therefore, $u(\bar{x})$ lies on the $u_2$-axis. We assume that $u(x^-)$, $u(\bar{x})$, $u(x^+)$ are as in
Figure \ref{fig:x-points}. Then,
\[
\begin{array}{ll}
& Lu(x^-)=\Lambda u(x^-)=T_2u(x^-), \medskip\\
& Lu(x^+)=T_1\Lambda u(T^{-1}_1x^+)=T_1\Lambda u(x^-)=T_1T_2u(x^-)=T_2T_1u(x^-)=T_2u(x^+), \medskip\\
& Lu(x^-)=\Lambda u(x^-)=T_1T_2u(x^-)=T_2T_1u(x^-)=T_2u(x^+), \medskip\\
& Lu(x^+)=T_1\Lambda u(T^{-1}_1x^+)=T_1\Lambda u(x^-)=T_1T_1T_2u(x^-)=T_2u(x^-),
\end{array}
\]
consequently, continuity is verified in these cases. The verification of the constraint is straightforward. Finally, we define
\[
u^\ast_{R,\mu} := Lu_{R,\mu}
\]
and verify that $u^\ast_{R,\mu}$ does not increase the functional $J$. Indeed,
\[
W((Lu)(x)) = W(g\Lambda u(g^{-1}x)) = W(\Lambda u(g^{-1}x)) = W(u(g^{-1}x))
\]
and consequently, the term $W$ of the functional $J$ does not change since $T_i$ is an isometry. On the other hand, the term $\int_{\Omega_{R,\mu}} |\nabla u|^2\, \dd x$ does not increase by (\eqref{lambda-contraction}).
\end{proof}

\begin{corollary}[H1, H2]\label{corollary-minimizer}
There is a minimizer $u_{R,\mu}$ of the constrained problem that satisfies
\begin{equation}\label{urm-minimizer}
u_{R,\mu}(\Omega^1_{R,\mu}) \subseteq \overline{F}.
\end{equation}
\end{corollary}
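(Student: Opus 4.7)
The plan is to combine Propositions \ref{constrained} and \ref{proposition-fundamental} directly, with the minimality of the original function doing all the remaining work. First I would invoke Proposition \ref{constrained} to obtain some minimizer $u_{R,\mu}$ of $J_{R,\mu}$ on $U^c_{R,\mu}$, valid for all $R,\mu \in [1,\infty]$. Next I would apply Proposition \ref{proposition-fundamental} to this minimizer to produce $u^\ast_{R,\mu} := L u_{R,\mu}$, which is already known to belong to $U^c_{R,\mu}$, to satisfy $J(u^\ast_{R,\mu}) \leq J(u_{R,\mu})$, and to map the fundamental domain $\Omega^1_{R,\mu}$ into $\overline{F}$.

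The closing step is the minimality argument: because $u_{R,\mu}$ realizes $\inf_{U^c_{R,\mu}} J_{R,\mu}$ and $u^\ast_{R,\mu}$ lies in the same admissible class, the inequality $J(u^\ast_{R,\mu}) \leq J(u_{R,\mu})$ must in fact be an equality, so
\[
J_{R,\mu}(u^\ast_{R,\mu}) = J_{R,\mu}(u_{R,\mu}) = \inf_{U^c_{R,\mu}} J_{R,\mu}.
\]
Thus $u^\ast_{R,\mu}$ is itself a minimizer of the constrained problem and, by construction, satisfies the positivity condition \eqref{urm-minimizer}. Relabeling $u^\ast_{R,\mu}$ as $u_{R,\mu}$ yields the statement.

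I do not expect a genuine obstacle here, since the substantive work has already been carried out inside Proposition \ref{proposition-fundamental}: the preservation of $\mathcal{H}^2_2$-equivariance under the gluing across symmetry lines, the preservation of the pointwise constraint $|u-a^\pm| \leq r$ on $C^\pm_{R,\mu,\eta}$, the nonincrease of the Dirichlet integral via the pointwise nonexpansiveness \eqref{lambda-contraction} of $\Lambda$, and the invariance of the potential term under the $\mathcal{H}^2_2$-action of $W$. The argument is uniform in $R$ and $\mu$, so the strip case $\mu = \infty$ needs no separate treatment.
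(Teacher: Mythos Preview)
Your proposal is correct and matches the paper's approach: the corollary is stated without proof there, as it is an immediate consequence of combining Proposition~\ref{constrained} with Proposition~\ref{proposition-fundamental} exactly as you describe. The minimality step you spell out (forcing equality in $J(u^\ast_{R,\mu}) \leq J(u_{R,\mu})$ and then relabeling) is the intended, and essentially only, way to pass from the two propositions to the corollary.
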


Next, we need an {\em a priori} bound.

\begin{lemma}\label{a-priori-bound-lemma}
There is an $M>0$, independent of $R$, $\mu$, $n$, such that
\[
|u_{R,\mu}(x)| < M, \text{ for } x \in \Omega_{R,\mu}.
\]
\end{lemma}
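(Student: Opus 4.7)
The strategy is a convex-cutoff based on hypothesis (H2), which is precisely designed to supply an $L^\infty$-bound. Let $P:\R^2\to\overline{C_0}$ denote the nearest-point projection onto the closed bounded convex set $\overline{C_0}$. Three features of $P$ drive the proof: $P$ is $1$-Lipschitz, it fixes $\overline{C_0}$ pointwise (in particular $a^\pm\in C_0$), and since $C_0$ is $\mathcal{H}^2_2$-symmetric, $P$ commutes with every $g\in\mathcal{H}^2_2$.

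I would set $\tilde u:=P\circ u_{R,\mu}$ and verify four points in turn. (i) \emph{Equivariance and Sobolev regularity}: since $P$ is globally Lipschitz and commutes with the group action, $\tilde u$ inherits $\mathcal{H}^2_2$-equivariance and belongs to $W^{1,2}_E(\Omega_{R,\mu})$; no gluing-across-axes subtlety of the type treated in Proposition \ref{proposition-fundamental} arises. (ii) \emph{Constraint}: on $C^\pm_{R,\mu,\eta}$, the $1$-Lipschitz property together with $Pa^\pm=a^\pm$ gives $|\tilde u(x)-a^\pm|\le|u_{R,\mu}(x)-a^\pm|\le r$, so $\tilde u\in U^c_{R,\mu}$. (iii) \emph{Positivity}: using that $\overline{C_0}$ is symmetric with respect to both coordinate axes, a short reflection argument shows $P(\overline{F})\subset\overline{F}$, so the inclusion $u_{R,\mu}(\Omega^1_{R,\mu})\subset\overline{F}$ from Corollary \ref{corollary-minimizer} passes to $\tilde u$. (iv) \emph{Energy inequality}: the $1$-Lipschitz property yields $|\nabla\tilde u|\le|\nabla u_{R,\mu}|$ a.e.; for the potential, either $u_{R,\mu}(x)\in\overline{C_0}$, in which case $\tilde u(x)=u_{R,\mu}(x)$, or $u_{R,\mu}(x)\notin\overline{C_0}$, in which case $\tilde u(x)\in\partial C_0$ and by (H2)
\begin{equation*}
W(\tilde u(x))\le\max_{\partial C_0}W\le W(u_{R,\mu}(x)).
\end{equation*}
Integrating, $J_{R,\mu}(\tilde u)\le J_{R,\mu}(u_{R,\mu})$.

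By minimality, $\tilde u$ is itself a solution of \eqref{min-problem} retaining the positivity property, so we may replace $u_{R,\mu}$ by $\tilde u$, after which $u_{R,\mu}(x)\in\overline{C_0}$ for a.e.\ $x\in\Omega_{R,\mu}$. Then
\begin{equation*}
M:=\sup\{|v|\mid v\in\overline{C_0}\}
\end{equation*}
is the desired bound, depending only on the fixed set $C_0$ from (H2) and not on $R$, $\mu$, or any minimizing sequence used upstream (the same cutoff, applied term-by-term, yields a uniformly bounded minimizing sequence). The step that requires the most care is (iii): one has to check that projection onto a set symmetric about both coordinate axes cannot push a first-quadrant point out of the first quadrant, which is seen by noting that if $Pu$ had, say, negative second component, then its reflection across the $u_1$-axis would still lie in $\overline{C_0}$ by symmetry and would be strictly closer to $u$, contradicting the defining property of $P$. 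Everything else is formal once the three structural properties of $P$ listed above are in hand, so I do not anticipate a serious obstacle.
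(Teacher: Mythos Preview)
Your argument is correct and coincides with the paper's own proof: both project onto the bounded convex set $\overline{C_0}$ and use that the projection is nonexpansive, preserves equivariance and the constraint, and does not increase $W$ by the assumption in (H2). Your version is in fact more detailed than the paper's---in particular you verify explicitly that the projection preserves the positivity $u(\Omega^1_{R,\mu})\subset\overline{F}$, which the paper leaves implicit.
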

\begin{proof}
For the convex set $C_0$ introduced in (H1), we consider the mapping $\Lambda : \R^2 \to C_0$,
\begin{equation}\label{lambda-mapping}
\Lambda u := 
\left\{\begin{array}{lll}
Pu, & \text{if} & u \notin C_0, \medskip\\
u, & \text{if} & u \in C_0,
\end{array}\right. 
\end{equation}
where $Pu$ is the projection of $u$ on $\partial C_0$. By (H1), $W(\Lambda u) \leq W(u)$. Also, the mapping $\Lambda$ is nonexpansive in the Euclidean norm. We set $(Lu)(x) := \Lambda u(x)$ and notice that $L$ preserves equivariance, honors the constraint, and reduces $J_{R,\mu}$. It follows that the minimizer $u_{R,\mu}$ of the constrained problem takes values in $C_0$. Thus \eqref{lambda-mapping} holds.
\end{proof}

\section{Local estimates}
Given $u : x \in \R^2 \to \R^2$, we write $u(x)-a^\pm$ in polar form,
\[
u(x) - a^\pm = |u(x)-a^\pm| \frac{u(x)-a^\pm}{|u(x)-a^\pm|}= \rho^\pm(x) n^\pm(x),
\]
with $\rho^\pm : x \in \R^2 \to \R_+$ and $n^\pm : x\in\R^2 \to\mathbb{S}^1$. So, if $u \in U^c_{R,\mu}$, we have
\[
u(x) = a^+ + \rho^+ (x) n^+(x), \text{ with } \rho^+(x) \leq r, \text{ for } x \in C^+_{R,\mu,\eta},
\]
and similarly for $x \in C^-_{R,\mu,\eta}$. We notice that the polar form is well defined for $\rho(x) \neq 0$.

For $u \in W^{1,2}_{\text{loc}}$, it follows that $\rho$, $n \in W^{1,2}_{\text{loc}}$ and moreover, $|\nabla
u|^2 = |\nabla\rho|^2 + \rho^2| \nabla n|^2$. On the other hand, on the set $\{u=a\}$, we have $|\nabla u|=0$ a.e. Therefore, for any measurable set $S$, we have
\[
\int_S| \nabla u|^2\, \dd x = \int_{S \cap \{ \rho>0 \}} \left\{ |\nabla\rho(x)|^2 + \rho^2(x) | \nabla n(x)|^2 \right\} \dd x.
\]

\begin{lemma}[H1]\label{lem-estimate}
Suppose $u_{R,\mu}$ is a minimizer of the constrained problem \eqref{min-problem}. Then, the following estimate holds
\begin{equation}\label{cosh-formula}
\rho^+_{R,\mu}(x) \leq r \frac{\cosh(c(R\mu - x_1))}{\cosh(c(\mu - \eta)R)}, \text{ a.e.\ } x \in C^+_{R,\mu,\eta},
\end{equation}
where $c$ as in {\em (H1)}, with an analogous estimate for $x\in C^-_{R,\mu,\eta}$. Here, $1\leq R<\infty$, $1\leq\mu\leq\infty$, and $1/2<\eta<\mu$, for $r<r_0$, $\eta$, $r_0$ fixed.
\end{lemma}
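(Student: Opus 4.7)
The plan is to prove the estimate as a comparison-principle bound on the scalar $\rho^+_{R,\mu}$, realized through a one-sided variational inequality for the constrained minimizer. The comparison function is
\[
\phi(x_1) := r\,\frac{\cosh(c(\mu R-x_1))}{\cosh(c(\mu-\eta)R)},
\]
which satisfies $\phi''=c^2\phi$, $\phi(\eta R)=r$, and $\phi'(\mu R)=0$; for $\mu=\infty$ this formula is read as its limit $re^{-c(x_1-\eta R)}$, with the Neumann condition replaced by decay at infinity. The analytic input from (H1) is that for every unit vector $n$ the scalar function $g_n(\rho):=W(a^+ + \rho n)$ satisfies $g_n''\geq c^2$ and $g_n'(0)=W_u(a^+)\cdot n=0$, so that $g_n'(\rho)\geq c^2\rho$ for $\rho\in[0,r_0]$.

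To avoid the regularity pitfalls of a direct pointwise PDE argument, I plan to derive the bound variationally. Set $\psi := (\rho^+_{R,\mu}-\phi)^+$, supported on $\{\rho^+_{R,\mu}>\phi\}\subset C^+_{R,\mu,\eta}$, and consider the family of competitors $u_t := u_{R,\mu} - t\psi n^+$ on $C^+_{R,\mu,\eta}$, extended equivariantly to $\Omega_{R,\mu}$ (and analogously on $C^-_{R,\mu,\eta}$). Because $\psi \leq \rho^+_{R,\mu} \leq r$, one has $|u_t - a^+| = \rho^+_{R,\mu} - t\psi \in [0,r]$ on $C^+_{R,\mu,\eta}$, so $u_t \in U^c_{R,\mu}$ for every $t\in[0,1]$. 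Minimality of $u_{R,\mu}$ then gives $\left.\tfrac{d}{dt} J_{R,\mu}(u_t)\right|_{t=0^+} \geq 0$; unfolding this with the polar decomposition $\nabla u = n^+\otimes\nabla\rho^+ + \rho^+\nabla n^+$ and the identity $n^+\cdot\partial_i n^+ = 0$ produces the variational inequality
\[
\int_{C^+_{R,\mu,\eta}} \bigl\{ \nabla\rho^+\cdot\nabla\psi + \psi\rho^+|\nabla n^+|^2 + \psi\, g'_{n^+}(\rho^+) \bigr\}\, \dd x \leq 0.
\]

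To close the argument I write $\nabla\rho^+ = \nabla\psi + \nabla\phi$ on the support of $\psi$ and integrate $\int\nabla\phi\cdot\nabla\psi\,\dd x$ by parts against $\Delta\phi = c^2\phi$. The boundary contributions $\int\psi\,\partial_\nu\phi$ vanish: on $\{x_1=\eta R\}$ because $\psi\equiv 0$ there (since $\phi = r \geq \rho^+$), on $\{x_1 = \mu R\}$ because $\phi'(\mu R) = 0$, and on $\{|x_2|=R\}$ because $\phi$ is $x_2$-independent. Combined with the convexity estimate $g'_{n^+}(\rho^+) \geq c^2\rho^+ = c^2(\phi+\psi)$ on $\{\rho^+ > \phi\}$, the inequality collapses to
\[
\int_{C^+_{R,\mu,\eta}} \bigl\{ |\nabla\psi|^2 + \psi\rho^+|\nabla n^+|^2 + c^2\psi^2 \bigr\}\, \dd x \leq 0,
\]
and nonnegativity of each integrand forces $\psi \equiv 0$ a.e., which is the claim. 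The main technical difficulty is the rigorous vanishing of the boundary terms in the integration by parts; in the case $\mu = \infty$ one additionally truncates at $x_1 = \mu_0 R$ and sends $\mu_0 \to \infty$, using the exponential decay of $\phi_\infty$ together with the $L^\infty$ bound on $u_{R,\infty}$ from Lemma \ref{a-priori-bound-lemma} to kill the far-field boundary contribution.
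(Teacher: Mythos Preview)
Your argument is correct and rests on the same two ingredients as the paper's proof: the one-sided first-variation inequality coming from the constraint $\rho^+ \le r$, and the comparison function $\phi$ solving $\phi''=c^2\phi$ with $\phi(\eta R)=r$, $\phi'(\mu R)=0$. The difference is in how the two are combined.

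The paper proceeds in stages: it first tests with compactly supported nonpositive perturbations $\hat p\, n^+$ to obtain $\Delta\rho^+_{R,\mu}\ge c^2\rho^+_{R,\mu}$ weakly in the interior of $C^+_{R,\mu,\eta}$; it then bootstraps to a classical solution (showing $\rho^+<r$ in the interior), and uses a Hopf-lemma argument at the lateral boundary to prove $\partial\rho^+/\partial n=0$ there. Only after establishing both the interior inequality and the Neumann condition does it apply the weak maximum principle to $v=\rho^+-\phi$ by testing with $v^+$.

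You short-circuit this by using $\psi=(\rho^+-\phi)^+$ directly as the deformation in $J_{R,\mu}$. Because admissibility in $U^c_{R,\mu}$ imposes no boundary condition on the competitor, the first-variation inequality
\[
\int_{C^+_{R,\mu,\eta}}\bigl\{\nabla\rho^+\cdot\nabla\psi+\rho^+\psi|\nabla n^+|^2+\psi\,W_u(u)\cdot n^+\bigr\}\,\dd x\le 0
\]
holds without any knowledge of $\partial\rho^+/\partial n$; the only integration by parts is then performed on the explicit smooth function $\phi$, whose boundary behavior you control directly. This eliminates the detour through interior regularity and the Hopf lemma. What the paper's longer route buys is the intermediate information that $\rho^+<r$ strictly in the interior and that the natural Neumann condition holds on $\partial_L C^+_{R,\mu,\eta}$, facts that are useful elsewhere but not needed for the bare estimate \eqref{cosh-formula}.
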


\begin{proof}
Suppose that
\begin{equation}\label{elliptic-w-problem}
\left\{\begin{array}{l}
\Delta w - c^2 w \geq 0, \medskip\\
B w \leq 0,
\end{array}\right.  
\end{equation}
weakly in the space $W^{1,2}_\# (C^+_{R,\mu,\eta})$, the latter defined as the completion in the $W^{1,2}$ norm of the space
\[
\left\{ f \in C^\infty (\overline{C^+_{R,\mu,\eta}}) \cap W^{1,2}(C^+_{R,\mu,\eta}) \mid f^+ = 0 \text{ on } \{x_1 = \eta R\} \right\},
\]
where
\[
Bw := \left\{\begin{array}{ll}
w, & \text{on } x_1=\eta R, \medskip\\
\dfrac{\partial w}{\partial n}, & \text{on } \partial_L C^+_{R,\mu,\eta}\ ( := \partial C^+_{R,\mu,\eta}\setminus \{ x_1 = \eta R \}), \\
\end{array}\right.
\]
and \eqref{elliptic-w-problem} is meant in the sense
\begin{equation}\label{inthesense}
\int_{C_{R,\mu,\eta}} \left\{ \nabla w \nabla\phi + c^2 w \phi \right\} \dd x \leq 0,
\end{equation}
for $w$, $\phi \in W^{1,2}_\# (C^+_{R,\mu,\eta})$, with $\phi \geq 0$ a.e. Then, we claim that
\begin{equation}\label{w-negative}
w \leq 0, \text{ a.e.\ in } C^+_{R,\mu,\eta}.
\end{equation}

To prove the claim, by density we can take $\phi:=w^+$ in \eqref{inthesense} and so we can conclude that
\[
0 \geq \int_{C^+_{R,\mu,\eta}} \left\{ \nabla w \nabla
w^+ + c^2w w^+ \right\} \dd x = \int_{C^+_{R,\mu,\eta}} \left\{ |\nabla w^+|^2 + c^2 |w^+|^2 \right\} \dd x = 0,
\]
thus, $w^+ = 0$ in $C^+_{R,\mu,\eta}$. Next we will show that
\begin{equation}\label{deltarho-weakly}
\Delta \rho_{R,\mu} \geq \rho_{R,\mu} c^2 \text{ weakly in } W^{1,2}(C^+_{R,\mu,\eta}).
\end{equation}
For showing \eqref{deltarho-weakly}, we consider $u_\varepsilon(x) = u_{R,\mu}(x) + \varepsilon \hat{p}(x) n(x)$, with $\hat{p}(x) \leq 0$ in $C^+_{R,\mu,\eta}$, $\hat{p}\in C^\infty_0(C^+_{R,\mu,\eta})$. We
notice that $|u_\varepsilon(x) - a^\pm| = |\rho_{R,\mu}(x)+\varepsilon\hat{p}(x)| \leq r$ in $C^\pm_{R,\mu,\eta}$. Then,
\begin{align*}
&\frac{\dd}{\dd \varepsilon} \Big|_{\varepsilon=0} J(u_\varepsilon) \geq 0 \ \Leftrightarrow \ \frac{\dd}{\dd \varepsilon}\Big|_{\varepsilon=0} \int_{\Omega_{R,\mu,\eta}} \left\{\frac{1}{2} |\nabla u_\varepsilon|^2 + W(u_\varepsilon) \right\} \dd x \geq 0 \bigskip\\
&\Leftrightarrow \int_{C_{R,\mu,\eta}} \left\{ \nabla\rho_{R,\mu} \nabla \hat{p} + \rho_{R,\mu}\hat{p} |\nabla
n(x)|^2 + \hat{p} W_u (u_{R,\mu}) n(x) \right\} \dd x \geq 0,
\end{align*}
from which it follows that
\[
\int_{C_{R,\mu}} \left\{ \nabla\rho_{R,\mu} \nabla\hat{p} + \hat{p} W_u(u_{R,\mu}) n(x) \right\} \dd x \geq 0.
\]
Utilizing (H1), we obtain
\[
\int_{C_{R,\mu}} \left\{ \nabla\rho_{R,\mu} \nabla \hat{p} + c^2 \hat{p} \rho_{R,\mu} \right\} \dd x \geq 0,
\]
and therefore \eqref{deltarho-weakly} has been established.

Next we will show that $\rho_{R,\mu} < r$ a.e.\ in the interior of $C_{R,\mu,\eta}$ from which it will follow, up to a modification on a null set, that $u_{R,\mu}$ is a classical solution of
\begin{equation}\label{interior-elliptic}
\Delta u_{R,\mu} - W_u (u_{R,\mu}) = 0, \text{ in the interior of } C_{R,\mu,\eta}.
\end{equation}
Suppose now for the sake of contradiction that $\rho_{R,\mu} = r$ on a set $A$ of positive measure. However, this is in conflict with $\Delta\rho_{R,\mu} \geq c^2\rho_{R,\mu}$ in $W^{1,2}(C_{R,\mu})$ since $\nabla\rho_{R,\mu}=0$ a.e.\ on this set $A$. Therefore, $\rho_{R,\mu}(x) < r$ a.e.\ in $C^+_{R,\mu,\eta}$ as required.

In the following we show that
\begin{equation}\label{normal-derivative-zero}
\frac{\partial\rho_{R,n}}{\partial n} = 0 \text{ on } \partial_L C_{R,\mu,\eta} \setminus \{A,B\},
\end{equation}
where $A$, $B$ are the corners. For $x^\ast$ in a subset of points $\partial_L C_{R,\mu}\setminus\{A,B\}$ such that $\rho_{R,\mu}(x^\ast)<r$ a.e.\ on it, the natural boundary conditions hold classically and so \eqref{normal-derivative-zero} is valid. Therefore, the case of interest is when $\rho_{R,\mu}(x^\ast) = r$. We notice that in the interior of $C_{R,\mu,\eta}$, \eqref{interior-elliptic} is satisfied classically and that $u_{R,\mu}$ is regular. From the bound $|u_{R,\mu}|<\text{constant}$, which holds uniformly in the
interior of $C_{R,\mu,\eta}$, we obtain by elliptic regularity that $|\nabla \rho_{R,\mu}| < \text{constant}$ on the boundary with a similar estimate on the second-order derivatives. Consequently, $\rho_{R,\mu}(x)$ is continuous at $x^\ast$ and the outer normal derivative $\partial\rho_{R,\mu}/\partial n$ exists at $x^\ast$. We know that $\Delta\rho_{R,\mu}\geq c^2\rho_{R,\mu}$ classically in the interior of $C_{R,\mu}$ and by the preceding argument, $\rho_{R,\mu}$ is continuous at $x=x^\ast$ and $\partial\rho_{R,\mu}/\partial n (x^\ast)$ exists. Applying the Hopf lemma, we obtain
\begin{equation}\label{hopf-lemma}
\frac{\partial\rho_{R,\mu}}{\partial n}(x^\ast) > 0.
\end{equation}
We now set $u_\varepsilon(x) = u_{R,\mu} + \varepsilon\hat{p}(x) n$, $\hat{p} \leq 0$ smooth
with $\text{supp}(\hat{p}) \subseteq B(x^\ast;\delta) \cap \overline{C_{R,\mu,\eta}}$, $0<\delta \ll1$. Then,
$u_\varepsilon \in U^c_{R,\mu}$ and
\begin{equation}\label{contradiction}
0 \leq \frac{\dd}{\dd \varepsilon} \Big|_{\varepsilon=0} \int_{\Omega_{R,\mu}} \left\{ \frac{1}{2} |\nabla u_\varepsilon|^2 + W(u_\varepsilon) \right\} \dd x = \int_{\partial\Omega_{R,\mu}} \frac{\partial\rho_{R,\mu}}{\partial n}\hat{p}\, \dd S,
\end{equation}
from \eqref{interior-elliptic}, which however is in contradiction to \eqref{hopf-lemma}. Therefore, $\rho_{R,\mu}(x^\ast) = r$ cannot possibly hold and so \eqref{normal-derivative-zero} is valid.

To conclude, we set
\[
v := \rho^+_{R,\mu}(x) - r \frac{\cosh(c(R\mu-x_1))}{\cosh(c(\mu-\eta)R)}.
\]
We will show that $v$ satisfies \eqref{elliptic-w-problem}. By the preceding argument, it follows that $\Delta v - c^2 v \geq 0$ classically in the interior of $C^+_{R,\mu,\eta}$. Thus, given $\phi$ as in the definition of \eqref{deltarho-weakly}, we have
\begin{align*}
0 \leq \int_{C^+_{R,\mu}} \left\{ \Delta v - c^2 v \right \} \phi\, \dd x 
&= \int_{C^+_{R,\mu}} \left\{ -\nabla v \nabla\phi - c^2  \phi \right\} \dd x + \int_{\partial_L C_{R,\mu,\eta}}\frac{\partial v}{\partial n}\phi\, \dd S \bigskip\\
&= \int_{C^+_{R,\mu}} \left\{ -\nabla v \nabla\phi - c^2 v \phi \right\} \dd x,
\end{align*}
from \eqref{normal-derivative-zero}. Finally, we note that the points $A$, $B$ are negligible in the boundary integral since $|\nabla v| < \text{constant}$, up to the boundary. The proof of lemma is complete.
\end{proof}

Taking $\mu\to\infty$ in Lemma \ref{lem-estimate}, we obtain
\begin{corollary}[H1]\label{corollary-cosh}
For $\mu=\infty$, the minimizer $u_R$ satisfies the estimate
\[
\rho^+_R(x) \leq r \mathrm{e}^{ -c(x_1-\eta R)}, \text{ for } x_1 \geq \eta R.
\]
\end{corollary}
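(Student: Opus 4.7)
The plan is to obtain this bound as the $\mu\to\infty$ limit of the cosh estimate in Lemma~\ref{lem-estimate}, exploiting the fact that $u_R=u_{R,\infty}$ is itself constructed as such a limit in Proposition~\ref{constrained}. First I fix a point $x=(x_1,x_2)$ with $x_1\ge \eta R$ and $|x_2|<R$, and take $\mu>x_1/R$ so that $x\in C^+_{R,\mu,\eta}$. Lemma~\ref{lem-estimate} gives
\[
\rho^+_{R,\mu}(x)\ \le\ r\,\frac{\cosh(c(R\mu-x_1))}{\cosh(c(\mu-\eta)R)}\qquad\text{for a.e.\ such }x.
\]

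The key elementary step is the pointwise limit of the right-hand side. Dividing numerator and denominator by $e^{cR\mu}$ yields
\[
\frac{\cosh(c(R\mu-x_1))}{\cosh(c(\mu-\eta)R)}\ =\ \frac{e^{-cx_1}+e^{cx_1-2cR\mu}}{e^{-c\eta R}+e^{c\eta R-2cR\mu}}\ \xrightarrow{\mu\to\infty}\ e^{-c(x_1-\eta R)},
\]
since the second terms in numerator and denominator decay to zero.

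To close the argument, I invoke the diagonal subsequence $\{u_{R,\mu_k}\}$ supplied by Proposition~\ref{constrained}, which converges weakly in $W^{1,2}_{\mathrm{loc}}(\Omega_{R,\infty})$ to $u_R$ and hence, by Rellich--Kondrachov compactness, pointwise a.e.\ after extracting a further subsequence. Consequently $\rho^+_{R,\mu_k}(x)=|u_{R,\mu_k}(x)-a^+|\to \rho^+_R(x)$ a.e.\ in the half-strip $\{x_1\ge\eta R,\ |x_2|<R\}$, and combining this with the pointwise limit of the cosh ratio gives $\rho^+_R(x)\le r\,e^{-c(x_1-\eta R)}$ a.e. Since $u_R$ is smooth in the interior by the elliptic regularity argument already used in the proof of Lemma~\ref{lem-estimate}, the inequality upgrades to a pointwise bound on the half-strip. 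I do not expect any serious obstacle: the substantive work is packaged in Lemma~\ref{lem-estimate} and in the construction of $u_R$, and the only remaining task is the elementary combination of the asymptotics of the cosh ratio with the a.e.\ convergence of the finite-$\mu$ minimizers.
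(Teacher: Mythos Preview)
Your proposal is correct and matches the paper's one-line justification, which reads simply ``Taking $\mu\to\infty$ in Lemma~\ref{lem-estimate}''; you have supplied the two details one would want, namely the elementary limit of the cosh ratio and the a.e.\ convergence $u_{R,\mu_k}\to u_R$ coming from the diagonal construction in Proposition~\ref{constrained}. As a minor shortcut, note that Lemma~\ref{lem-estimate} is already stated for $1\le\mu\le\infty$, so its proof applies verbatim at $\mu=\infty$ with the barrier $r\,e^{-c(x_1-\eta R)}$ in place of the cosh ratio, which lets one bypass the convergence of finite-$\mu$ minimizers altogether.
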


\section{Global estimates (H1, H2, H3)}\label{global-estimates}
\begin{theorem}[H1, H2, H3]\label{global-theorem}
Suppose $r<r_0$ and $\mu=\infty$ as in the definition of the constrained problem in Section \ref{constrained-sec}. We denote the minimizer by $u_R$ and the domain by $\Omega_R$ respectively, and assume that it possesses the property in Corollary \ref{corollary-minimizer}, that is, $u_R$ is positive.

Then, there exists $R_0>0$, such that for $x\in\Omega_R$ the estimate
\begin{equation}\label{exponential-estimate}
|u_R(x) - a^+| < M \mathrm{e}^{-c |x_1|}, \text{ for } x_1 \geq 0,\, R\geq R_0,
\end{equation}
holds, where $M$ is a constant depending on the set $C_0$ in (H2).
\end{theorem}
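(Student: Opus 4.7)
My plan is to use the $Q$-monotonicity (H3) to show that $v := Q(u_R)$ is subharmonic on the half-strip $\Sigma := \{x_1 > 0\} \cap \Omega_R$, then upgrade the resulting uniform bound to exponential decay of $\rho^+ := |u_R - a^+|$ by combining the local estimate of Corollary \ref{corollary-cosh} with an $R$-independent localization of the set where $u_R$ is bounded away from $a^+$.

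By the positivity property (Corollary \ref{corollary-minimizer}) and equivariance, $u_R(\Sigma)\subset\bar{\mathcal{D}}$, and the strong maximum principle applied to the first component yields $u_R\in\mathcal{D}$ in the interior. Since the constraint is inactive in the interior of $C^+_{R,\infty,\eta}$ by the argument of Lemma \ref{lem-estimate}, $u_R$ classically solves $\Delta u_R=W_u(u_R)$ throughout the interior of $\Sigma$, and hence
\[
\Delta v = \mathrm{tr}\bigl(Q_{uu}(u_R)\,\nabla u_R\,(\nabla u_R)^\top\bigr) + Q_u(u_R)\cdot W_u(u_R) \geq 0,
\]
the first term being nonnegative by convexity of $Q$ and the second by (H3). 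The boundary data are: $v\leq Q_0$ on $\{x_1=0\}$ (by Lemma \ref{a-priori-bound-lemma} and continuity of $Q$ on $C_0\cap\bar{\mathcal{D}}$); $\partial_n v=0$ on $\{|x_2|=R\}$ (natural Neumann); and $v\leq 2r$ on $\{x_1=\eta R\}$ (by Corollary \ref{corollary-cosh} together with property (iii) of (H3)).

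Subharmonicity with these data alone only gives a uniform bound. The crux is to show that the \emph{bad set} $B:=\{x\in\Sigma : \rho^+(x)\geq r_0\}$ is contained in a slab $\{x_1\leq A\}$ with $A$ depending only on $C_0$. Here I plan to combine the a priori bound $|u_R|\leq M_0$ (Lemma \ref{a-priori-bound-lemma}) with interior elliptic regularity to obtain a uniform Lipschitz bound on $u_R$, the energy bound $J_R(u_R)\leq CR$ from Proposition \ref{constrained}, the lower bound $W(u_R)\geq W_0>0$ on $B$ (from continuity and the two-well structure), and a minimizer/competitor argument: if $B$ were to persist up to $x_1$ arbitrarily large (uniformly in $R$), one could replace $u_R$ on the portion of $B$ beyond some $A$ by $a^+$ with a transition of bounded energy, strictly lowering $J_R$ and contradicting minimality. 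The subharmonicity of $v$ feeds into this step by allowing slab-by-slab comparison.

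Once $B\subset\{x_1\leq A\}$ is established, on $\{x_1\geq A\}\cap\Sigma$ we have $\rho^+<r_0$, and the argument of Lemma \ref{lem-estimate} (using the nondegeneracy (H1)) yields $\Delta\rho^+\geq c^2\rho^+$ classically, with $\rho^+\leq r_0$ at $x_1=A$ and natural Neumann on $|x_2|=R$. Comparison with the function $\phi(x_1):=r_0\,e^{-c(x_1-A)}$, which satisfies $\Delta\phi=c^2\phi$ and dominates $\rho^+$ on the parabolic boundary (with $\phi\to 0$ as $x_1\to\infty$ matching the decay at infinity), yields $\rho^+(x)\leq r_0\,e^{cA}\,e^{-cx_1}$ for $x_1\geq A$. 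For $0\leq x_1\leq A$, the trivial bound $\rho^+\leq M_0+a$ is absorbed into the exponential by choosing $M\geq (M_0+a)e^{cA}$, producing a constant $M$ depending only on $C_0$, $r_0$, $a$, and $c$. The chief obstacle throughout is precisely the $R$-independent localization of the bad set in the third paragraph; everything else is standard max-principle/comparison machinery.
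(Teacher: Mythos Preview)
Your overall architecture matches the paper's: use convexity of $Q$ and (H3) to get $\Delta Q(u_R)\geq 0$ on the half-strip, localize the ``bad set'' $\{\rho^+\geq r_0\}$ to an $R$-independent slab, then run the exponential comparison of Lemma~\ref{lem-estimate}. The gap is in your proposed mechanism for the localization.

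The competitor argument you sketch does not close. A replacement of $u_R$ by $a^+$ beyond $\{x_1=A\}$ requires a transition layer across a segment of length $2R$ in the $x_2$-direction; since $u_R$ on $\{x_1=A\}$ is only known to lie in $C_0$, the gluing costs $O(R)$ energy, not $O(1)$. On the gain side, knowing merely that the bad set touches $\{x_1=L\}$ for some large $L$ yields (via the Lipschitz bound) only a single ball where $W\geq W_0$, hence an $O(1)$ lower bound on the energy there. The energy bound $J_R(u_R)\leq CR$ is linear in $R$, so neither side of the comparison produces a contradiction. Your remark that ``subharmonicity feeds into this step by allowing slab-by-slab comparison'' is on the right track, but the competitor framework is the wrong vehicle.

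What the paper actually does is iterate two barriers. First, subharmonicity of $Q(u_R)$ on $\{0\leq x_1\leq \theta\}$ (with Neumann on $|x_2|=R$) is compared not with a constant but with the \emph{linear} supersolution $U(x_1;\theta)=B+\frac{r-B}{\theta}x_1$, where $B=\sup_{C_0} Q$; this interpolates between $B$ at $x_1=0$ and the constraint value $r$ at $x_1=\theta$. One starts with $\theta=\eta R$. The line drops below $r_0$ at $\bar x_0=\delta\theta$ with $\delta=(B-r_0)/(B-r)<1$, so by (H3)(iii) one has $\rho^+\leq r_0$ for $x_1\geq \bar x_0$. Second, Lemma~\ref{lem-estimate} applied from $\bar x_0$ gives $\rho^+(x)\leq r_0\,e^{-c(x_1-\bar x_0)}$, which reaches the value $r$ at $\bar x_1'=\bar x_0+c^{-1}\ln(r_0/r)$. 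Now repeat the linear comparison on $[0,\bar x_1']$, and so on. The resulting sequence satisfies (up to constants) $\bar x'_{i+1}=\delta\,\bar x'_i+c^{-1}\ln(r_0/r)$, a contraction with fixed point $\delta^\ast=(B-r)/(cr_0)$, independent of $R$. This is the missing idea: the subharmonicity is used not for a uniform bound but for a linear barrier whose slope improves at each step, and the alternation with the exponential estimate drives the bad set into $\{x_1\leq \delta^\ast\}$.
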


\begin{proof}
\medskip
{\em Step 1.} We begin by noting that by Lemma \ref{a-priori-bound-lemma}, we may assume
that $u_R(x)\in C_0$.

\medskip

{\em Step 2}. Suppose $Q(u)$ a $C^2$ convex function as in (H3). We can check easily that the following holds true
\begin{equation}\label{step2-trace}
\Delta Q(u(x)) = \tr \left\{ (\partial^2 Q)(\nabla u)(\nabla u)^\top \right\} +  Q_u (u(x)) \cdot \Delta u(x) \geq Q_u(u(x)) \cdot \Delta u(x). 
\end{equation}

\medskip

{\em Step 3}. Let $u_R$ be the minimizer. Then,
\begin{equation}  \label{step3-axb}
Q(u_R(x)) \leq A x_1 + B =: U(x_1,\eta R), \text{ for } x_1 \in [0,\eta R],\, x=(x_1, x_2),
\end{equation}
where $A=(r-B)/\eta R$, $B$ a bound, and $Q(u_R(x))\leq B$, for $x\in\Omega_R$, provided by Step 1.

To prove \eqref{step3-axb}, from \eqref{step2-trace} in $\Omega_R \cap \left\{ 0 \leq x_1 \leq \eta R \right\}$, we have
\[
\Delta Q(u_R(x)) \geq Q_u(u_R(x)) \cdot W_u(u_R(x)) \geq 0
\]
by \eqref{step2-trace}, \eqref{urm-minimizer}, (H3). Then, \eqref{step3-axb} follows by the maximum principle.

\begin{figure}
\includegraphics[scale=.225]{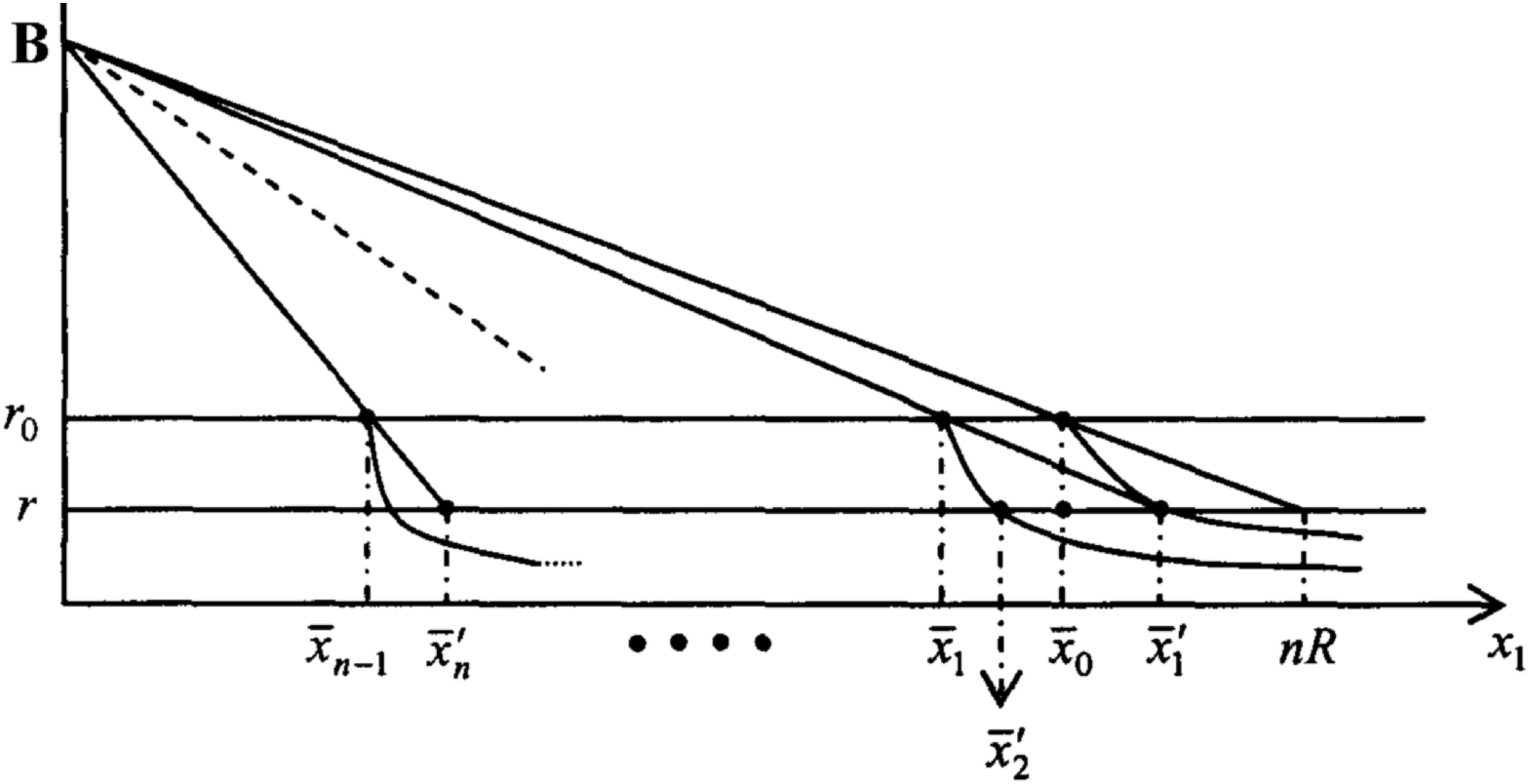}
\caption{}
\label{figure-compl}
\end{figure}

We shall denote by $U(x_1; \theta)$ the function $((r-B) / \theta) x_1 + B$. Then, $Q(u_R(x)) \leq U(x_1; \eta R)$, for $0 \leq x_1 \leq \eta R =: \bar{x}'_0$. Next, we consider the equation
\begin{equation}\label{ux-eta-r}
U (x_1; \eta R) = r_0.
\end{equation}
which has the unique solution
\[
\bar{x}_0=\frac{B - r_0}{B - r} \eta R = \delta \eta R = \delta \eta R, \text{ with } \delta = \frac{B - r_0}{B - r},\, 0<\delta<1.
\]
By the definition of $Q$, $\rho^+_R \leq r_0$, $\bar{x}_0 \leq x_1$ from which we obtain, via Lemma \ref{lem-estimate}, for $\bar{x}_0$ in the place of $\eta R$,
\begin{equation} \label{in-the-place}
\rho^+_R(x) \leq r_0 \mathrm{e}^{-c(x_1 - \bar{x}_0)} =: r_0 \sigma(x_1; \bar{x}_0), \text{ for } \bar{x}_0 \leq x_1.
\end{equation}

Now we continue the iteration. Let $\bar{x}'_1$ be the solution to $r_0 \sigma (x_1; \bar{x}_0) = r$. As before, we have $Q(u_R(x)) \leq U(x_1; \bar{x}'_1)$, for $x_1 \in [0,\bar{x}'_1]$, and therefore $\rho^+_R(x) \leq r_0$, for $x_1 \in [\bar{x}_1, \mu R]$, where $\bar{x}_1$ the solution to $U(x_1; \bar{x}'_1) = r_0$. Consequently, we have the estimate
\[
\rho^+_R(x) \leq r_0 \sigma(x_1; \bar{x}_1), \text{ for } \bar{x}_1 \leq x_1 \leq \mu R.
\]
We denote the solution to $r_0\sigma(x_1; \bar{x}_1) = r$ by $\bar{x}'_2$ and keep going, thus generating two sequences $\{\bar{x}_i\}$, $\{\bar{x}'_i\}$, for $i=1,2,\ldots$

The iteration is terminated if for some $i$, the slope of the line $U(x_1; \bar{x}'_i)$, which is $(r-B)/ \bar{x}'_i$, gets equal or less than $-cr_0$, the lower bound of the slope of $r_0 \sigma (x_1;\bar{x}_i)$ at
the point $\bar{x}_i$. Consequently, since $\bar{x}'_i$ is decreasing as $i \to \infty$ and
\[
\left| \frac{\dd}{\dd x_1} \Big|_{\bar{x}'_i} \sigma(x_1; \bar{x}_i) \right| \leq c,
\]
we may let $i \to \infty$. The iteration is terminated independently of $R$ and at a distance
\[
\lim_{i \to\infty} \bar{x}'_i = \frac{B-r}{cr_0} =: \delta^\ast
\]
from the line $x_1=0$. Moreover, we have
\[
\rho^+_R(x) \leq r_0 \sigma(x_1;\lim_{i \to\infty}\bar{x}_i) ~\text{ and }~ \lim_{i\to\infty} \bar{x}_i \leq x_1,
\]
from which it follows that $\rho^+_R(x) \leq r$, for $x_1\geq \delta^\ast$ and $x_1$. Thus,
\[
\rho^+_R(x) \leq r_0 \mathrm{e}^{-c(x_1-\eta R)}, \text{ for } \delta^\ast \leq x_1.
\]

Note that
\[
R_0 = -\frac{\ln(r / 2r_0)}{c\delta},\quad \delta=\frac{B-r_0}{B-r},\quad \delta^\ast=\frac{B-r}{cr_0}.
\]
The proof is complete. 
\end{proof}

\section{Proof of Theorem \ref{theorem} (H1, H2, H3, H4, H5)}
In this section we will work with the infinite strip, which we denote by $\Omega_R$. The constrained problem in Section \ref{constrained-sec} provides a minimizer $u_R$ which may be assumed to possess the positivity property by Corollary \ref{corollary-minimizer}. Moreover, $u_R$ satisfies the
uniform exponential bound \eqref{exponential-estimate}. By standard local estimates, the following limit exists.

\begin{equation}\label{limit-exists}
u(x) = \lim_{R_n \to \infty} u_{r_n} (x).
\end{equation}
From \eqref{exponential-estimate} we obtain
\[
|u(x) - a^+| < M \mathrm{e}^{-c|x_1|}, \text{ for } x_1 \geq 0.
\]
Parts (i) and (ii) of Theorem \ref{theorem} have been established.

\medskip

{\em Step 1.} (Upper Bound)
\begin{equation}\label{upper-bound}
J_{\Omega_R}(u) \leq C + 2R E_{\min},  
\end{equation}
where
\[
J_{\Omega_R} (v) := \int_{\Omega_R} \left\{ \frac{1}{2} |\nabla v|^2 + W(v) \right\} \dd x.
\]

First we establish
\begin{equation}\label{ur-upper-bound}
J_{\Omega_R}(u_R) \leq C + 2R E_{\min}. 
\end{equation}
For this purpose consider the comparison map
\[
\tilde{u}(x_1, x_2)=\left\{\begin{array}{ll}
e_+(x_1), &\text{for } x_2 \geq1, \bigskip\\
\left( \dfrac{1+x_2}{2} \right) e_+(x_1) + \left( \dfrac{1-x_2}{2} \right) e_-(x_1), & \text{for } |x_2| \leq 1,\bigskip\\
e_-(x_1), & \text{for } x_2\leq -1.
\end{array}\right.
\]
Note that $\tilde{u}$ is positive, equivariant, and satisfies the
pointwise constraint in Proposition \ref{constrained}. Consequently
\begin{equation}\label{consequently}
J_{\Omega_R}(u_R) \leq J_{\Omega_R}(\tilde{u}) \leq C+2R E_{\min}.
\end{equation}
Next fix $R$, choose $R' > R$, otherwise arbitrary, and notice that
\begin{equation}\label{notice-that}
J_{\Omega_R'}(u_{R'}) = J_{\Omega_R}(u_{R'}) + \int_{R<|x_2|<R'} \left\{ \frac{1}{2}|\nabla u_{R'}|^2 + W(u_{R'}) \right\} \dd x.
\end{equation}
Set
\[
V^{R'}_{x_2}(x_1) = u_{R'}(x_1,x_2),
\]
and notice that by the exponential estimate \eqref{exponential-estimate} and the variational characterization of $e_\pm$ \cite[Th.\ 3.7]{5} we have the estimate
\begin{equation}\label{have-the-estimate}
E(V^{R'}_{x_2}) \geq E(e_\pm) = E_{\min}.  
\end{equation}
Hence
\begin{align} \label{double-integral-estimate}
&\iint_{R<|x_2<R'} \left\{ \frac{1}{2}|\nabla u_{R'}|^2 + W(u_{R'}) \right\} \dd x_1 \dd x_2 \nonumber\\
&\qquad \geq \int_{R<|x_2|<R'}E(V^{R'}_{x_2})\, \dd x_2 \geq 2E_{\min}(R'-R). 
\end{align}
On the other hand
\begin{equation} \label{other-hand}
J_{\Omega'_R}(u_{R'}) \leq C+2E_{\min}R'. 
\end{equation}
Thus by \eqref{notice-that} we obtain
\[
C + 2R' E_{\min} \geq J_{\Omega_R}(u_{R'} + 2(R'-R) E_{\min},
\]
from which we obtain
\begin{equation} \label{from-which-we-obtain}
C + 2R E_{\min} \geq J_{\Omega_R}(u_{R'}), \text{ for }R'>R. 
\end{equation}
Taking $R'\to\infty$, we obtain \eqref{upper-bound}.

\medskip

{\em Step 2.} (Lower Bound)
\begin{equation} \label{lower-bound}
J_{\Omega_R}(u)\geq 2RE_{\min}. 
\end{equation}
To see this, first notice that by \eqref{have-the-estimate}
\[
\int_{|x_2|<R} E(V^{R'}_{x_2})\, \dd x_2 \geq E_{\min}(2R), \text{ for }R'>R,
\]
that is,
\begin{equation}\label{double-integral-2}
\iint_{|x_2|<R} \left\{ \frac{1}{2}|\nabla u_{R'}|^2 + W(u_{R'}) \right\} \dd x_1\dd x_2 \geq E_{\min}(2R).
\end{equation}

By utilizing the exponential estimate \eqref{exponential-estimate} and elliptic estimates (on the gradient) one can apply the dominated convergence theorem and obtain
\begin{align*}
\lim_{R'\to\infty} \iint_{|x_2|<R} &\left\{ \frac{1}{2}|\nabla u_{R'}|^2 + W(u_{R'}) \right\} \dd x_1 \dd x_2 \medskip\\
&= \iint_{|x_2|<R} \left\{ \frac{1}{2} |\nabla u|^2 + W(u) \right\} \dd x_1 \dd x_2.
\end{align*}
Thus, by \eqref{double-integral-2} we obtain \eqref{lower-bound}.

Combining Step 1.\ and Step 2.\ above we obtain part (iii) of Theorem \ref{theorem}. Notice that (H4) has not been invoked so far.

\medskip

{\em Step 3.}
\begin{equation}\label{step3-estimate}
\iint_{\R^2} \left| \frac{ \partial u}{\partial x_2} \right|^2 \dd x_1 \dd x_2 < \infty,
\end{equation}
from \cite[(5.9)]{1}.

First we establish
\begin{equation}\label{establish}
\int_{\Omega_R} \left| \frac{\partial u_R}{\partial x_2} \right|^2 \dd x < C,
\end{equation}
from which \eqref{step3-estimate} follows immediately, since for a given compact set $K\subset\Omega_R$, it follows that $\int_K |\partial u_R / \partial x_2 |^2 \dd x < C$, for $C$
independent of $R$ and $K$.

Note that
\begin{align*}
C + 2 E_{\min} E\overset{\eqref{upper-bound}}{\geq} J_{\Omega_R}(u_R) &= \int_{\Omega_R} \frac{1}{2}
\left| \frac{\partial u_R}{\partial x_2} \right|^2 \dd x + \int_{|x_2|<R} E(V^R_{x_2})\, \dd x_2 \medskip\\
&\geq\int_{\Omega_R} \frac{1}{2} \left| \frac{\partial u_R}{\partial x_2} \right|^2 \dd x + \int_{|x_2|<R} E_{\min}\, \dd x_2,
\end{align*}
and by the exponential estimate and the variational characterization of the elements of $\mathcal{M}$ in \cite{5}, the last quantity equals
\[
\int_{\Omega_R} \frac{1}{2} \left| \frac{\partial u_R}{\partial x_2} \right|^2 \dd x + 2R E_{\min},
\]
hence \eqref{establish} follows.

\medskip

{\em Step 4.} From \eqref{step3-estimate} we obtain that given any sequence $x^n_2 \to +\infty$, there is a subsequence ${x^{n}_2}'$ such that
\begin{equation}\label{subsequence-limit}
u(x_1,{x_2^{n}}') \to \theta (x_1),
\end{equation}
where
\begin{equation}\label{theta-problem}
\frac{\partial^2\theta}{\partial x^2_1} - W_u(\theta) = 0. 
\end{equation}
This is via standard elliptic estimates (see \cite[Lemma 5.2]{1}).

The exponential estimate for $u(x_1,x_2)$ implies that
\begin{equation}\label{theta-connection}
\theta(\pm\infty) = a^\pm,  
\end{equation}
that is, $\theta$ is a connection.

We will establish that the limit as $x_2\to\infty$ exists in \eqref{subsequence-limit} and that $\theta\in\mathcal{M}$, that is, a minimizing connection.

We first observe that at least along a sequence $x^n_2\to\infty$,
\[
u(\cdot, x^n_2) \to \mathcal{M}.
\]
Indeed, if not, then
\begin{equation}
\liminf_{|x_2|\to\infty} E(u(\cdot,x_2)) > E_{\min},
\end{equation}
by the finiteness of $\mathcal{C}\setminus\mathcal{M}$, but this is in conflict with the Upper Bound \eqref{upper-bound}.

Finally we will show arguing by contradiction that it is not possible to have two sequences $x^{n_1}_2$ and $x^{n_2}_2$, tending to $\infty$, such that
\begin{equation}
\left\{\begin{array}{l} \label{u-theta-ab}
u(x_1,x^{n_1}_2) \to\theta^A(x_1), \medskip\\
u(x_1,x^{n_2}_2)\to\theta^B(x_1),
\end{array}\right. 
\end{equation}
where $\theta^A \in \mathcal{M}$, $\theta^B\in\mathcal{C}$, $\theta^A \neq \theta^B$.

To continue, we need a few observations on the set of connections $\mathcal{C}$. By symmetry, we have $\theta_1(0)=0$, $\dot{\theta}_2(0)=0$. By positivity, we have $\dot{\theta}_1(0)\geq 0$. By the equipartition (first integral), we have
\begin{equation}\label{first-integral}
\frac{1}{2} |\dot{\theta}(x_1)|^2 = W(\theta(x_1)).  
\end{equation}
Evaluating \eqref{first-integral} at $x_1=0$, we see that $\dot{\theta}^2_1(0)$ is determined by $(\theta_1(0),\theta_2(0))$, and since $\dot{\theta}_1(0)\geq 0$, it is determined completely by $c = \theta_2(0)$. In conclusion, the set of relevant connections in \eqref{u-theta-ab} is an one-parameter family determined by $\theta_2(0)$. It follows that
\begin{equation} \label{one-parameter}
\theta^A_2(0) \neq \theta^B_2(0). 
\end{equation}
By hypothesis, $\mathcal{C}$ is discrete. Therefore, we can choose $c$ such that,
\[
\theta^A_2(0) < c < \theta^B_2(0),
\]
which does not correspond to any of the connections in $\mathcal{C}$. Now, from \eqref{u-theta-ab}, $u_2(0,x^{n_1}_2)\to\theta^A_2(0)$ and $u_2(0,x^{n_2}_2)\to\theta^B_2(0)$. Therefore, by continuity there exists $x^{n_3}_2 \to +\infty$ such that $u_2(0,x^{n_3}_2)=c$. By \eqref{subsequence-limit}, there is a subsequence of $\{x^{n_3}_2\}$, say $x^{n_4}_2\to +\infty$, such that $u(x_1,x^{n_4}_2)\to\theta(x_1)$. Therefore, $u_2(0,x^{n_4}_2)\to\theta_2(0)=c$, which is a contradiction. Thus, we established that \eqref{u-theta-ab} is not possible. 

In conclusion we established that
\begin{equation}\label{final}
u(x_1,x_2)\to\theta(x_1), \text{ for } \theta\in\mathcal{M}.  
\end{equation}
The proof of Theorem \ref{theorem} is complete.

\section*{Acknowledgments}
NDA was partially supported by Kapodistrias grant No.\ 70/4/5622 at the University of Athens.

\nocite{*}
\bibliographystyle{plain}

\end{document}